\documentclass[12pt]{amsart}%
\usepackage{amsfonts}
\usepackage{amsmath}
\usepackage{amssymb}
\usepackage{graphicx}%
\setcounter{MaxMatrixCols}{30}
%TCIDATA{OutputFilter=latex2.dll}
%TCIDATA{Version=5.50.0.2953}
%TCIDATA{CSTFile=amsartci.cst}
%TCIDATA{Created=Thursday, May 12, 2011 21:55:30}
%TCIDATA{LastRevised=Thursday, December 29, 2011 09:01:51}
%TCIDATA{<META NAME="GraphicsSave" CONTENT="32">}
%TCIDATA{<META NAME="SaveForMode" CONTENT="1">}
%TCIDATA{BibliographyScheme=Manual}
%TCIDATA{<META NAME="DocumentShell" CONTENT="Articles\SW\AMS Journal Article">}
%BeginMSIPreambleData
\providecommand{\U}[1]{\protect\rule{.1in}{.1in}}
%EndMSIPreambleData
\marginparwidth -1cm \oddsidemargin 0cm \evensidemargin 0cm
\topmargin 0pt \textheight 225mm \textwidth 160mm
\newtheorem{theorem}{Theorem}[section]
\theoremstyle{plain}

\newtheorem{corollary}{Corollary}[section]

\newtheorem{lemma}{Lemma}[section]

\newtheorem{proposition}{Proposition}[section]
\newtheorem{remark}{Remark}[section]

\numberwithin{equation}{section}
\begin{document}
\title[Sharp Adams type inequalities]{Sharp Adams type inequalities in Sobolev spaces $W^{m,\frac{n}{m}}\left(
 \mathbb{R}^{n}\right)  $ for arbitrary integer $m$ }
\author{Nguyen Lam}
\author{Guozhen Lu}
\address{Nguyen Lam and Guozhen Lu\\
Department of Mathematics\\
Wayne State University\\
Detroit, MI 48202, USA\\
Emails: nguyenlam@wayne.edu and gzlu@math.wayne.edu}
\thanks{Corresponding Author: G. Lu at gzlu@math.wayne.edu}
\thanks{Research is partly supported by a US NSF grant DMS0901761.}
\date{\today}
\keywords{Moser-Trudinger type inequalities, Adams type inequalities, Comparison principle, rearrangement, polyharmonic operators.}

\begin{abstract}
The main purpose of our paper is to prove sharp Adams-type inequalities in
unbounded domains of $\mathbb{R}^{n}$ for the Sobolev space $W^{m,\frac{n}{m}%
}\left(
%TCIMACRO{\U{211d} }%
%BeginExpansion
\mathbb{R}
%EndExpansion
^{n}\right)  $ for any positive integer $m$ less than $n$. Our results
complement those of Ruf and Sani \cite{RS} where such inequalities are only
established for even integer $m$. Our inequalities are also a generalization
of the Adams-type inequalities in the special case  $n=2m=4$ proved in \cite{Y}   and stronger than those in
\cite{RS} when $n=2m$ for all positive integer $m$ by using different Sobolev norms.

\end{abstract}
\maketitle

\section{Introduction}

\bigskip Let $\Omega\subset%
%TCIMACRO{\U{211d} }%
%BeginExpansion
\mathbb{R}
%EndExpansion
^{n}$, $n\geq2$ be a bounded domain. The Sobolev embedding theorems say that
$W_{0}^{k,p}\left(  \Omega\right)  \subset L^{q}\left(  \Omega\right)  ,~1\leq
q\leq\frac{np}{n-kp},~kp<n$ and that $W_{0}^{k,\frac{n}{k}}\left(
\Omega\right)  \subset L^{q}\left(  \Omega\right)  ,~1\leq q<\infty$. However,
we can show by easy examples that $W_{0}^{k,\frac{n}{k}}\left(  \Omega\right)
\nsubseteq L^{\infty}\left(  \Omega\right)  $. In this case, Yudovich
\cite{Yu}, Pohozaev \cite{Po} and Trudinger \cite{Tru} independently showed
that $W_{0}^{1,n}\left(  \Omega\right)  \subset L_{\varphi_{n}}\left(
\Omega\right)  $ where $L_{\varphi_{n}}\left(  \Omega\right)  $ is the Orlicz
space associated with the Young function $\varphi_{n}(t)=\exp\left(
\left\vert t\right\vert ^{n/(n-1)}\right)  -1$. In his 1971 paper \cite{Mo},
J. Moser finds the largest positive real number $\beta_{n}=n\omega
_{n-1}^{\frac{1}{n-1}}$, where $\omega_{n-1}$ is the area of the surface of
the unit $n-$ball, such that if $\Omega$ is a domain with finite $n-$measure
in Euclidean $n-$space $%
%TCIMACRO{\U{211d} }%
%BeginExpansion
\mathbb{R}
%EndExpansion
^{n},~n\geq2,$ then there is a constant $c_{0}$ depending only on $n$ such
that
\[
\frac{1}{\left\vert \Omega\right\vert }\int_{\Omega}\exp\left(  \beta
\left\vert u\right\vert ^{\frac{n}{n-1}}\right)  dx\leq c_{0}%
\]
for any $\beta\leq\beta_{n}$, any $u\in W_{0}^{1,n}\left(  \Omega\right)  $
with $\int_{\Omega}\left\vert \nabla u\right\vert ^{n}dx\leq1$. Moreover, this
constant $\beta_{n}$ is sharp in the meaning that if $\beta>\beta_{n}$, then
the above inequality can no longer hold with some $c_{0}$ independent of $u$.
Such an inequality is nowadays known as Moser-Trudinger type inequality.

Moser's result for first order derivatives was extended to high order
derivatives by D. Adams \cite{A}. Indeed, Adams found the sharp constants for
higher order Moser's type inequality. To state Adams' result, we use the
symbol $\nabla^{m}u$, $m$ is a positive integer, to denote the $m-$th order
gradient for $u\in C^{m}$, the class of $m-$th order differentiable functions:%
\[
\nabla^{m}u=\left\{
\begin{array}
[c]{l}%
\bigtriangleup^{\frac{m}{2}}u\text{ \ \ \ \ \ \ }\mathrm{for}%
\,\,\,m\,\,\mathrm{even}\\
\nabla\bigtriangleup^{\frac{m-1}{2}}u\,\,\ \mathrm{for}\,\,\,m\,\,\mathrm{odd}%
\end{array}
\right.  .
\]
where $\nabla$ is the usual gradient operator and $\bigtriangleup$ is the
Laplacian. We use $||\nabla^{m}u||_{p}$ to denote the $L^{p}$ norm ($1\leq
p\leq\infty$) of the function $|\nabla^{m}u|$, the usual Euclidean length of
the vector $\nabla^{m}u$. We also use $W_{0}^{k,p}(\Omega)$ to denote the
Sobolev space which is a completion of $C_{0}^{\infty}(\Omega)$ under the norm
of $||u||_{L^{p}(\Omega)}+%
%TCIMACRO{\dsum \limits_{j=1}^{k}}%
%BeginExpansion
{\displaystyle\sum\limits_{j=1}^{k}}
%EndExpansion
||\nabla^{j}u||_{L^{p}(\Omega)}$. Then Adams proved the following:

\bigskip\textbf{Theorem A.} \textit{Let }$\Omega$\textit{ be an open and
bounded set in } $\mathbb{R}^{n}$\textit{. If } $m$ \textit{is a positive
integer less than }$n$, \textit{then there exists a constant} $C_{0}=C(n,m)>0$
\textit{such that for any} $u\in W_{0}^{m,\frac{n}{m}}(\Omega)$ \textit{and}
$||\nabla^{m}u||_{L^{\frac{n}{m}}(\Omega)}\leq1$, \textit{then}
\[
\frac{1}{|\Omega|}\int_{\Omega}\exp(\beta|u(x)|^{\frac{n}{n-m}})dx\leq C_{0}%
\]
\textit{for all} $\beta\leq\beta(n,m)$ \textit{where}
\[
\beta(n,\ m)\ =\left\{
\begin{array}
[c]{c}%
\frac{n}{w_{n-1}}\left[  \frac{\pi^{n/2}2^{m}\Gamma(\frac{m+1}{2})}%
{\Gamma(\frac{n-m+1}{2})}\right]  ^{\frac{n}{n-m}}\text{ }\mathrm{when}%
\,\,\,m\,\,\mathrm{is\,\,odd}\\
\frac{n}{w_{n-1}}\left[  \frac{\pi^{n/2}2^{m}\Gamma(\frac{m}{2})}{\Gamma
(\frac{n-m}{2})}\right]  ^{\frac{n}{n-m}}\text{ \ }\mathrm{when}%
\,\,\,m\,\,\mathrm{is\,\,even}%
\end{array}
\right.  .
\]
\textit{ Furthermore, for any $\beta>\beta(n,m)$, the integral can be made as
large as possible.}\newline

Note that $\beta(n,1)$ coincides with Moser's value of $\beta_{n}$ and
$\beta(2m,m)=2^{2m}\pi^{m}\Gamma(m+1)$ for both odd and even $m$.

The Adams inequality was extended recently by Tarsi \cite{Ta}. More precisely,
Tarsi used the Sobolev space with Navier boundary conditions $W_{N}%
^{m,\frac{n}{m}}\left(  \Omega\right)  $ which contains the Sobolev space
$W_{0}^{m,\frac{n}{m}}\left(  \Omega\right)  $\textit{ }as a closed subspace:

\bigskip\textbf{Theorem B.} \textit{Let }$n>2$\textit{ and }$\Omega\subset%
%TCIMACRO{\U{211d} }%
%BeginExpansion
\mathbb{R}
%EndExpansion
^{n}$\textit{ be a bounded domain. Then there exists a constant }%
$C_{0}=C(n,m)>0$\textit{ such that for any }$u\in W_{N}^{m,\frac{n}{m}}%
(\Omega)$\textit{ with }$||\nabla^{m}u||_{L^{\frac{n}{m}}(\Omega)}\leq
1$\textit{ }%
\[
\frac{1}{|\Omega|}\int_{\Omega}\exp(\beta|u(x)|^{\frac{n}{n-m}})dx\leq C_{0}%
\]
\textit{for all }$\beta\leq\beta(n,m)$\textit{. Furthermore, the constant
}$\beta(n,m)$\textit{ is sharp in the sense that if }$\beta>\beta
(n,m)$\textit{ then the supremum is infinite.}\newline

The Adams inequality was also extended to compact Riemannian manifolds without
boundary by Fontana \cite{Fontana}. Also, the singular Moser-Trudinger
inequalities and the singular Adams inequalities which are the combinations of
the Hardy inequalities, Moser-Trudinger inequalities and Adams inequalities
are established in \cite{Ad2, LaLu4}.

The Moser-Trudinger's inequality and Adams inequality play an essential role
in geometric analysis and in the study of the exponential growth partial
differential equations where, roughly speaking, the nonlinearity behaves like
$e^{\alpha\left\vert u\right\vert ^{\frac{n}{n-m}}}$ as $\left\vert
u\right\vert \rightarrow\infty$. Here we mention Atkinson-Peletier \cite{AP},
Carleson-Chang \cite{CC}, Adimurthi et al. \cite{Ad1, Ad2, ASt, Ad3,  Ad5, AdY}, de
Figueiredo-Miyagaki-Ruf \cite{DeMR}, J.M. do \'{O} \cite{Do}, de Figueiredo-
do \'{O}-Ruf \cite{DeDoR},   Lam-Lu \cite{LaLu1,
LaLu3} and the references therein.

We notice that when $\Omega$ has infinite volume, the Moser-Trudinger's
inequality and Adams inequality don't make sense since the left hand side is
trivial. The sharp Moser-Trudinger type inequality for the first order
derivatives in the case $\left\vert \Omega\right\vert =+\infty$ was obtained
by B. Ruf \cite{R} in dimension two and Y.X. Li-Ruf \cite{LR} in general
dimension. In fact, such an inequality at the subcritical case was derived
earlier by Cao \cite{C} in dimension two and by Adachi and Tanaka in high
dimensions \cite{AT}. Recently, Ruf and Sani proved the Adams type inequality
for \textbf{higher derivatives of even orders} when $\Omega$ has infinite
volume. Indeed, Ruf and Sani proved the following Adams type inequality (see
\cite{RS}):

\bigskip\textbf{Theorem C. }\textit{Let }$m$\textit{ be \textbf{an even
integer} less than }$n$\textit{. There exists a constant }$C_{m,n}>0$\textit{
such that for any domain }$\Omega\subseteq%
%TCIMACRO{\U{211d} }%
%BeginExpansion
\mathbb{R}
%EndExpansion
^{n}$%
\[
\underset{u\in W_{0}^{m,\frac{n}{m}}\left(  \Omega\right)  ,\left\Vert
u\right\Vert _{m,n}\leq1}{\sup}\int_{\Omega}\phi\left(  \beta_{0}\left(
n,m\right)  \left\vert u\right\vert ^{\frac{n}{n-m}}\right)  dx\leq C_{m,n}%
\]
\textit{ where }%
\begin{align*}
\beta_{0}\left(  n,m\right)   &  =\frac{n}{\omega_{n-1}}\left[  \frac
{\pi^{\frac{n}{2}}2^{m}\Gamma\left(  \frac{m}{2}\right)  }{\Gamma\left(
\frac{n-m}{2}\right)  }\right]  ^{\frac{n}{n-m}},\\
\phi(t) &  =e^{t}-%
%TCIMACRO{\dsum \limits_{j=0}^{j_{\frac{n}{m}}-2}}%
%BeginExpansion
{\displaystyle\sum\limits_{j=0}^{j_{\frac{n}{m}}-2}}
%EndExpansion
\frac{t^{j}}{j!}\\
j_{\frac{n}{m}} &  =\min\left\{  j\in%
%TCIMACRO{\U{2115} }%
%BeginExpansion
\mathbb{N}
%EndExpansion
:j\geq\frac{n}{m}\right\}  \geq\frac{n}{m}.
\end{align*}%
\[
\left\Vert u\right\Vert _{m,n}=\left\Vert \left(  -\Delta+I\right)  ^{\frac
{m}{2}}u\right\Vert _{\frac{n}{m}}%
\]
\textit{This inequality is sharp in the sense that if we replace} $\beta
_{0}(n,m)$ \textit{by any } $\beta>\beta_{0}(n,m)$, \textit{then the supremum
is infinite.}

We note that the norm $||u||_{n, m}$ used in Theorem C is equivalent to the
Sobolev norm
\[
\left\Vert u\right\Vert _{W^{m,\frac{n}{m}}}=\left(  \left\Vert u\right\Vert
_{\frac{n}{m}}^{\frac{n}{m}}+%
%TCIMACRO{\dsum \limits_{j=1}^{m}}%
%BeginExpansion
{\displaystyle\sum\limits_{j=1}^{m}}
%EndExpansion
\left\Vert \nabla^{j}u\right\Vert _{\frac{n}{m}}^{\frac{n}{m}}\right)
^{\frac{m}{n}}.
\]
In particular, if $u\in W_{0}^{m,\frac{n}{m}}\left(  \Omega\right)  $ or $u\in
W^{m,\frac{n}{m}}\left(
%TCIMACRO{\U{211d} }%
%BeginExpansion
\mathbb{R}
%EndExpansion
^{n}\right)  $, then $\left\Vert u\right\Vert _{W^{m,\frac{n}{m}}}\leq$
$\left\Vert u\right\Vert _{m,n}$.

The work of Ruf and Sani raised a good open question: \textbf{Does Theorem C
hold when $m$ is odd?}

One of the primary purposes of this paper is to answer the above question in
an affirmative way. This is stated as follows:

\begin{theorem}
Let $m$ \textit{be an odd integer less than }$n$: $m=2k+1,~k\in%
%TCIMACRO{\U{2115} }%
%BeginExpansion
\mathbb{N}
%EndExpansion
$ and let $\beta(n,m)$ be as in Theorem A and the function $\phi$ be as in
Theorem C. Then there holds%
\[
\underset{u\in W^{m,\frac{n}{m}}\left(
%TCIMACRO{\U{211d} }%
%BeginExpansion
\mathbb{R}
%EndExpansion
^{n}\right)  ,\left\Vert \nabla\left(  -\Delta+I\right)  ^{k}u\right\Vert
_{\frac{n}{m}}^{\frac{n}{m}}+\left\Vert \left(  -\Delta+I\right)
^{k}u\right\Vert _{\frac{n}{m}}^{\frac{n}{m}}\leq1}{\sup}\int_{%
%TCIMACRO{\U{211d} }%
%BeginExpansion
\mathbb{R}
%EndExpansion
^{n}}\phi\left(  \beta\left(  n,m\right)  \left\vert u\right\vert ^{\frac
{n}{n-m}}\right)  dx<\infty.
\]

Moreover, \textit{the constant} $\beta(n,m)$ \textit{is sharp in the sense
that if we replace} $\beta(n, m)$ \textit{by any } $\beta>\beta(n, m)$,
\textit{then the supremum is infinity}.
\end{theorem}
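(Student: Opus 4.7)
The plan is to follow Ruf and Sani's scheme for even $m$, adapted to accommodate the $W^{1,n/m}$-type hypothesis on the intermediate function $v:=(-\Delta+I)^{k}u$. The hypothesis reads $\|v\|_{n/m}^{n/m}+\|\nabla v\|_{n/m}^{n/m}\le 1$ and the inversion of the Bessel operator yields $u=G_{2k}\ast v$, where $G_{2k}$ is the Bessel kernel of order $2k=m-1$. First I would perform a symmetrization reduction: replace $v$ by its radial decreasing rearrangement $v^{*}$, which preserves $\|v\|_{n/m}$ by equimeasurability and does not increase $\|\nabla v\|_{n/m}$ by P\'olya-Szeg\H{o}. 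Let $u^{\sharp}$ be the unique radial solution of $(-\Delta+I)^{k}u^{\sharp}=v^{*}$ in $\mathbb{R}^{n}$; iterating the Talenti-Trombetti comparison principle for the elliptic operator $-\Delta+I$ a total of $k$ times gives
\[
\int_{\mathbb{R}^{n}}\Phi(|u|)\,dx\le\int_{\mathbb{R}^{n}}\Phi(u^{\sharp})\,dx
\]
for every nondecreasing $\Phi\ge 0$ with $\Phi(0)=0$; applied to $\Phi(t)=\phi(\beta(n,m)\,t^{n/(n-m)})$ this reduces the theorem to the radial decreasing case.

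For radial decreasing $u$ I would split $\int_{\mathbb{R}^{n}}=\int_{B_{1}}+\int_{\mathbb{R}^{n}\setminus B_{1}}$. On the exterior, the radial lemma for $W^{1,n/m}$ functions combined with Young's convolution inequality applied to $u=G_{2k}\ast v$ (note $G_{2k}\in L^{1}$ since $2k<n$) gives $\|u\|_{L^{\infty}(\mathbb{R}^{n}\setminus B_{1})}<\infty$ and $\|u\|_{n/m}\le C$; the series definition of $\phi$ and the absorption $\sum_{j\ge j_{n/m}-1}c^{j}/j!\le e^{c}$ then make the exterior contribution finite. On $B_{1}$ I would truncate with $\eta\in C_{c}^{\infty}(B_{2})$, $\eta\equiv 1$ on $B_{1}$, setting $\tilde{u}:=\eta u\in W_{0}^{m,n/m}(B_{2})$ and prove
\[
\|\nabla^{m}\tilde{u}\|_{L^{n/m}(B_{2})}^{n/m}\le\|v\|_{n/m}^{n/m}+\|\nabla v\|_{n/m}^{n/m}+o(1)\le 1+o(1),
\]
where the lower-order commutator terms produced by $\eta$ are estimated by Bessel-potential bounds on $\nabla^{j}u$ ($j<m$) in terms of $\|v\|_{n/m}$. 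A subcritical-to-critical argument, applying Adams' Theorem~A to $\tilde{u}$ on $B_{2}$ with $\beta$ slightly below $\beta(n,m)$ and then letting $\beta\uparrow\beta(n,m)$, yields the required finiteness on $B_{1}$.

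Sharpness of $\beta(n,m)$ follows from an explicit Moser-Adams concentrating family $u_{\epsilon}$, built by truncating and regularizing the dilate $G_{m}(x/\epsilon)$ of the Bessel kernel of order $m$ and renormalizing so that $\|v_{\epsilon}\|_{n/m}^{n/m}+\|\nabla v_{\epsilon}\|_{n/m}^{n/m}\to 1$. Using the known singular asymptotics $G_{m}(x)\sim c_{n,m}|x|^{m-n}$ as $|x|\to 0$ together with the explicit form of $\beta(n,m)$ in Theorem~A, a direct computation shows that for every $\beta>\beta(n,m)$ the integrals $\int_{\mathbb{R}^{n}}\phi(\beta|u_{\epsilon}|^{n/(n-m)})\,dx$ diverge as $\epsilon\to 0^{+}$.

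The main obstacle I anticipate is the norm-transfer step on $B_{2}$: passing from the mixed global hypothesis $\|\nabla v\|_{n/m}^{n/m}+\|v\|_{n/m}^{n/m}\le 1$ to the pure top-order Sobolev norm $\|\nabla^{m}\tilde{u}\|_{L^{n/m}(B_{2})}\le 1+o(1)$ while preserving the sharp constant $\beta(n,m)$. The cutoff commutators involve $\nabla^{j}u$ ($j<m$) supported on the annulus $B_{2}\setminus B_{1}$ and must be shown to be $o(1)$; this rests on the precise pointwise decay of radial Bessel potentials and a careful Gagliardo-Nirenberg-type interpolation of the intermediate derivatives, which is precisely the point where the odd-order case diverges from the Ruf-Sani even-order argument.
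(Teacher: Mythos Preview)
Your symmetrization/comparison reduction and the exterior estimate are essentially what the paper does (the paper first passes to compactly supported approximants and works on finite balls $B_{R_l}$ with Navier conditions, but the mechanism is the same). The real divergence, and the genuine gap, is in your interior estimate on $B_{1}$.

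The cutoff argument you propose cannot reach the sharp constant. With $\eta$ supported in a \emph{fixed} ball $B_{2}$, the Leibniz expansion of $\nabla^{m}(\eta u)$ produces, besides $\eta\,\nabla^{m}u$, commutator terms supported on the fixed annulus $B_{2}\setminus B_{1}$. These are bounded, as you say, but there is no small parameter: your ``$o(1)$'' has no limit attached to it. At best you obtain $\|\nabla^{m}\tilde u\|_{n/m}\le 1+\varepsilon_{0}$ for some fixed $\varepsilon_{0}>0$ depending only on $n,m$, and rescaling $\tilde u$ to unit norm lets you apply Theorem~A only at the exponent $\beta(n,m)/(1+\varepsilon_{0})^{n/(n-m)}<\beta(n,m)$. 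The ``subcritical-to-critical'' passage does not help: Theorem~A gives a bound uniform in $\beta\le\beta(n,m)$ \emph{only} under the constraint $\|\nabla^{m}\cdot\|_{n/m}\le 1$, and letting $\beta\uparrow\beta(n,m)$ with norm strictly larger than $1$ yields nothing.

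The paper repairs this in two moves. First, it replaces the smooth cutoff by a \emph{polyharmonic correction}: on $B_{R_{0}}$ one subtracts from the radial $v_{l}$ an explicit linear combination of $g_{i}(|x|)=|x|^{m-1-2i}$, $i=0,\dots,k-1$, and a constant, with coefficients chosen so that the resulting $z_{l}$ satisfies the Navier conditions $\Delta^{j}z_{l}|_{\partial B_{R_{0}}}=0$ for $0\le j\le k$. Since each $g_{i}$ is polyharmonic of low enough order, $\nabla^{m}z_{l}=\nabla^{m}v_{l}$ \emph{exactly} on $B_{R_{0}}$: no commutators contaminate the top-order norm. Second, the paper exploits Lemma~2.3, which for odd $m$ gives the quantitative deficit
\[
\|\nabla^{m}v_{l}\|_{n/m}^{n/m}\ \le\ 1-\tfrac{1}{C}\sum_{j=0}^{m-1}\|\nabla^{j}v_{l}\|_{n/m}^{n/m},
\]
so the top-order norm sits strictly below $1$ by an amount controlled by the lower-order norms. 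The pointwise error introduced by the polyharmonic correction (Lemma~6.1) is of size $C/R_{0}$ times these \emph{same} lower-order norms; choosing $R_{0}$ large (not fixed at $1$) makes the multiplicative inflation in passing from $z_{l}$ to the rescaled auxiliary $w_{l}$ small enough to be absorbed by the deficit, yielding $\|\nabla^{m}w_{l}\|_{n/m}\le 1$ exactly. One then applies Tarsi's Theorem~B (Adams with Navier boundary data) on $B_{R_{0}}$ at the critical $\beta(n,m)$.

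In short, the missing ideas are (i) a correction that \emph{preserves} $\nabla^{m}$ rather than a cutoff that perturbs it, and (ii) trading the lower-order deficit built into the hypothesis against correction errors that shrink with the localization radius. Your sharpness sketch via a concentrating Bessel-kernel family is fine in outline; the paper simply quotes the Adams test functions together with Proposition~6.2 of Ruf--Sani.
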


In the special case $n=2m$ and $m$ an arbitrary positive integer, we can prove
the following stronger result which is the second main theorem of this paper:

\begin{theorem}
If $m=2k+1,~k\in%
%TCIMACRO{\U{2115} }%
%BeginExpansion
\mathbb{N}
%EndExpansion
$, then for all $\tau>0$, there holds%
\[
\underset{u\in W^{m,2}\left(
%TCIMACRO{\U{211d} }%
%BeginExpansion
\mathbb{R}
%EndExpansion
^{2m}\right)  ,\left\Vert \nabla\left(  -\Delta+\tau I\right)  ^{k}%
u\right\Vert _{2}^{2}+\tau\left\Vert \left(  -\Delta+\tau I\right)
^{k}u\right\Vert _{2}^{2}\leq1}{\sup}\int_{%
%TCIMACRO{\U{211d} }%
%BeginExpansion
\mathbb{R}
%EndExpansion
^{2m}}\left(  e^{\beta(2m, m)u^{2}}-1\right)  dx<\infty.
\]

If $m=2k,~k\in%
%TCIMACRO{\U{2115} }%
%BeginExpansion
\mathbb{N}
%EndExpansion
$, then for all $\tau>0$, there holds%
\[
\underset{u\in W^{m,2}\left(
%TCIMACRO{\U{211d} }%
%BeginExpansion
\mathbb{R}
%EndExpansion
^{2m}\right)  ,\left\Vert \left(  -\Delta+\tau I\right)  ^{k}u\right\Vert
_{2}\leq1}{\sup}\int_{%
%TCIMACRO{\U{211d} }%
%BeginExpansion
\mathbb{R}
%EndExpansion
^{2m}}\left(  e^{\beta(2m,m)u^{2}}-1\right)  dx<\infty.
\]
Moreover, the constant $\beta(2m,m)$ is sharp in the above inequalities in the
sense that if we replace $\beta(2m,m)$ by any $\beta>\beta(2m,m)$, then the
supremums will be infinity.
\end{theorem}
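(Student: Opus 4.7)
The key preliminary observation is that, via Plancherel, both constraints reduce to the same Bessel-type condition. In the odd case $m=2k+1$,
\begin{align*}
\|\nabla(-\Delta+\tau I)^{k}u\|_{2}^{2}+\tau\|(-\Delta+\tau I)^{k}u\|_{2}^{2}
&=\int_{\mathbb{R}^{2m}}(|\xi|^{2}+\tau)^{2k+1}|\widehat{u}|^{2}\,d\xi\\
&=\|(-\Delta+\tau I)^{m/2}u\|_{2}^{2},
\end{align*}
while in the even case $m=2k$ the constraint already reads $\|(-\Delta+\tau I)^{m/2}u\|_{2}\le 1$. Hence the entire theorem amounts to proving, for every $\tau>0$,
\[
\sup_{\|(-\Delta+\tau I)^{m/2}u\|_{2}\le 1}\int_{\mathbb{R}^{2m}}\bigl(e^{\beta(2m,m)u^{2}}-1\bigr)\,dx<\infty,
\]
together with the sharpness of the constant $\beta(2m,m)$.

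The plan is then to eliminate $\tau$ by the isotropic dilation $v(x)=u(x/\sqrt{\tau})$. A short Fourier-side computation gives
\[
(-\Delta+I)^{m/2}v(x)=\tau^{-m/2}\bigl[(-\Delta+\tau I)^{m/2}u\bigr](x/\sqrt{\tau}),
\]
so that $\|(-\Delta+I)^{m/2}v\|_{2}^{2}=\tau^{n/2-m}\|(-\Delta+\tau I)^{m/2}u\|_{2}^{2}$. The exponent $n/2-m$ vanishes precisely because $n=2m$, so the dilation is a bijection between the $\tau$-admissible class and the $\tau=1$ admissible class. A change of variables yields $\int_{\mathbb{R}^{2m}}(e^{\beta u^{2}}-1)\,dy=\tau^{-m}\int_{\mathbb{R}^{2m}}(e^{\beta v^{2}}-1)\,dx$, so finiteness of the $\tau=1$ sup transfers to finiteness of the $\tau$-sup.

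At $\tau=1$ no new analysis is needed: the odd case is exactly the specialization $n=2m$ of Theorem 1.1, and the even case is exactly the specialization $n=2m$ of Theorem C, both of which are already available. This gives the required finiteness. For sharpness, the sharpness clauses of Theorem 1.1 (odd $m$) and Theorem C (even $m$) supply, for any $\beta>\beta(2m,m)$, a sequence $v_{k}$ in the $\tau=1$ admissible class with $\int(e^{\beta v_{k}^{2}}-1)\,dx\to\infty$. Pulling back by $u_{k}(x)=v_{k}(\sqrt{\tau}\,x)$ yields a sequence in the $\tau$-admissible class with $\int(e^{\beta u_{k}^{2}}-1)\,dy=\tau^{-m}\int(e^{\beta v_{k}^{2}}-1)\,dx\to\infty$, establishing sharpness for every $\tau>0$.

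The only real obstacle is the algebraic bookkeeping in the dilation step: one must verify that the ``twisted'' combination $\|\nabla\cdot\|_{2}^{2}+\tau\|\cdot\|_{2}^{2}$ is exactly what conjugates the Fourier multiplier $(|\xi|^{2}+\tau)^{1/2}$ onto $(-\Delta+\tau I)^{k}u$, and that the critical-dimension identity $n=2m$ is precisely what forces the Sobolev-scaling exponent $n/2-m$ to vanish so that the dilation preserves rather than rescales the constraint. Beyond this verification, the theorem is entirely reduced to Theorem 1.1 and Theorem C.
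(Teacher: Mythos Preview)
Your argument is correct and takes a genuinely different route from the paper. The paper proves Theorem~1.2 directly, running the full machinery (density, symmetrization via the iterated comparison principle of Section~3, construction of the auxiliary Navier function $w_l$ on $B_{R_0}$, Tarsi's Theorem~B on $B_{R_0}$, and the radial lemmas to control the tail $I_2$) separately in the even and odd cases; these Sections~4--5 are in effect a warm-up for the proof of Theorem~1.1 in Section~6, which repeats the same scheme for general $n$. By contrast, you observe two points the paper does not exploit: first, that on the Fourier side both constraints collapse to the single Bessel condition $\int(|\xi|^2+\tau)^{m}|\widehat u|^2\,d\xi\le 1$; second, that in the critical dimension $n=2m$ the dilation $u\mapsto u(\cdot/\sqrt{\tau})$ has scaling exponent $n/2-m=0$ on this norm and hence identifies the $\tau$-constraint set with the $\tau=1$ set while multiplying the exponential integral by a fixed factor $\tau^{\pm m}$. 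This cleanly reduces Theorem~1.2 to the specializations $n=2m$ of Theorem~C (even~$m$) and Theorem~1.1 (odd~$m$). The gain of your approach is brevity and a transparent explanation of why the full $\tau$-family costs nothing extra; the cost is that you must invoke Theorem~1.1, whose proof in Section~6 is logically independent of Theorem~1.2 but is placed \emph{after} it in the paper, so your reduction reverses the expository order (the paper treats the $n=2m$ case as the model before the general one, whereas you derive it as a corollary).
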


We note that for $m=2k+1$ and any $a_{0}=1,a_{2}>0,\cdots,a_{m}>0$, there is
some $\tau>0$ such that (see Lemma 2.2):
\[
\left\Vert \nabla\left(  -\Delta+\tau I\right)  ^{k}u\right\Vert _{2}^{2}%
+\tau\left\Vert \left(  -\Delta+\tau I\right)  ^{k}u\right\Vert _{2}^{2}%
\leq\sum\limits_{j=0}^{m}a_{m-j}\int_{\mathbb{R}^{n}}\left\vert \nabla
^{j}u\right\vert ^{2}dx
\]
and for $m=2k$ and any $a_{0}=1,a_{2}>0,\cdots,a_{m}>0$, there is some
$\tau>0$ such that (see Lemma 2.1):
\[
\left\Vert \left(  -\Delta+\tau I\right)  ^{k}u\right\Vert _{2}^{2}\leq
\sum\limits_{j=0}^{m}a_{m-j}\int_{\mathbb{R}^{n}}\left\vert \nabla
^{j}u\right\vert ^{2}dx.
\]
Thus, as a consequence, we will be able to establish the third main theorem of
this paper. Namely, we will replace the norm $\left\Vert \cdot\right\Vert
_{m,n}$ by $\left\Vert \cdot\right\Vert _{W^{m,\frac{n}{m}}}$ in the above
Theorem C in the case $n=2m$ for all positive integer $m$.

\begin{theorem}
Let $m\geq1$ be an integer number. For all constants $a_{0}=1,a_{1}%
,...,a_{m}>0$, there holds%
\[
\underset{u\in W^{m,2}\left(
%TCIMACRO{\U{211d} }%
%BeginExpansion
\mathbb{R}
%EndExpansion
^{2m}\right)  ,\int_{%
%TCIMACRO{\U{211d} }%
%BeginExpansion
\mathbb{R}
%EndExpansion
^{2m}}\left(
%TCIMACRO{\dsum \limits_{j=0}^{m}}%
%BeginExpansion
{\displaystyle\sum\limits_{j=0}^{m}}
%EndExpansion
a_{m-j}\left\vert \nabla^{j}u\right\vert ^{2}\right)  dx\leq1}{\sup}\int_{%
%TCIMACRO{\U{211d} }%
%BeginExpansion
\mathbb{R}
%EndExpansion
^{2m}}\left[  \exp\left(  \beta\left(  2m,m\right)  \left\vert u\right\vert
^{2}\right)  -1\right]  dx<\infty.
\]
Furthermore this inequality is sharp, i.e., if $\beta(2m,m)$ is replaced by
any $\beta>\beta(2m,m)$, then the supremum is infinite.
\end{theorem}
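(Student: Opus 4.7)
I would prove the finite-supremum and sharpness statements separately, reducing the former to Theorem 1.2 via the norm-comparison lemmas already announced (Lemmas 2.1 and 2.2), and establishing the latter by a concentration argument based on the classical Adams-Moser extremal sequence.

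For the uniform bound, I split into two cases according to the parity of $m$. When $m=2k+1$ is odd, Lemma 2.2 supplies a $\tau>0$ so that
$$\|\nabla(-\Delta+\tau I)^{k}u\|_{2}^{2}+\tau\|(-\Delta+\tau I)^{k}u\|_{2}^{2}\leq\sum_{j=0}^{m}a_{m-j}\int_{\mathbb{R}^{2m}}|\nabla^{j}u|^{2}\,dx,$$
so any $u$ admissible for Theorem 1.3 is admissible for the odd-order inequality of Theorem 1.2 with this $\tau$, and the finite bound is inherited. When $m=2k$ is even, Lemma 2.1 analogously yields $\tau>0$ with $\|(-\Delta+\tau I)^{k}u\|_{2}^{2}\leq\sum_{j=0}^{m}a_{m-j}\|\nabla^{j}u\|_{2}^{2}$, so the unit-ball constraint of the second inequality in Theorem 1.2 is again implied and the supremum is finite.

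For sharpness I would use the Adams-Moser extremal sequence: radial functions $u_{k}\in C_{c}^{\infty}(B_{1})\subset W^{m,2}(\mathbb{R}^{2m})$ (Moser-type truncations built from $\log(1/|x|)$ iterated $m$ times) with $\|\nabla^{m}u_{k}\|_{2}^{2}=1$ for which $\int_{\mathbb{R}^{2m}}\exp(\beta u_{k}^{2})\,dx\to\infty$ whenever $\beta>\beta(2m,m)$; these are precisely the test functions that prove the sharpness of Theorem A in the critical dimension $n=2m$. A direct computation in annular coordinates shows that the lower-order norms $\|\nabla^{j}u_{k}\|_{2}^{2}$ for $0\leq j<m$ tend to $0$ as $k\to\infty$, since the relevant integrands concentrate on shrinking annuli on which $u_{k}$ and its sub-top derivatives are logarithmically small compared with $\nabla^{m}u_{k}$. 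Hence
$$\int_{\mathbb{R}^{2m}}\sum_{j=0}^{m}a_{m-j}|\nabla^{j}u_{k}|^{2}\,dx=a_{0}+o(1)=1+o(1).$$
Setting $\tilde u_{k}=u_{k}/\sqrt{1+o(1)}$ gives an admissible sequence for Theorem 1.3; since $\beta\tilde u_{k}^{2}\geq\beta' u_{k}^{2}$ on the support of $u_{k}$ for any fixed $\beta'\in(\beta(2m,m),\beta)$ and all large $k$, the integral $\int\exp(\beta\tilde u_{k}^{2})\,dx$ diverges, proving sharpness.

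The main delicate point is verifying that the Adams extremal sequence really has vanishing lower-order $L^{2}$ derivative norms on $\mathbb{R}^{2m}$ — a routine but careful Moser-style computation, parallel to what underlies the sharpness arguments of Ruf and of Ruf-Sani for the first-order and even-order cases. Once this is in hand the particular weights $a_{0}=1,a_{1},\ldots,a_{m}$ play a purely passive role, so that the same $\beta(2m,m)$ is sharp for every admissible choice of coefficients.
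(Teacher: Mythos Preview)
Your proposal is correct and follows essentially the same approach as the paper: the finite-supremum claim is reduced to Theorem~1.2 via Lemmas~2.1 and~2.2 according to the parity of $m$, and sharpness is obtained from the Adams extremal sequence (the paper simply cites Proposition~6.2 of Ruf--Sani and \cite{A} for this step). Your outline of why the lower-order $L^{2}$ norms of the Adams sequence vanish is in fact more explicit than what the paper provides.
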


In the special case $n=2m=4k=4$, the above theorem was proved by Yang in
\cite{Y}.

As a corollary of the above theorem, we have the following Adams type
inequality with the standard Sobolev norm:

\begin{theorem}
Let $m\geq1$ be an integer number. There holds%
\[
\underset{u\in W^{m,2}\left(
%TCIMACRO{\U{211d} }%
%BeginExpansion
\mathbb{R}
%EndExpansion
^{2m}\right)  ,\left\Vert u\right\Vert _{W^{m,2}}\leq1}{\sup}\int_{%
%TCIMACRO{\U{211d} }%
%BeginExpansion
\mathbb{R}
%EndExpansion
^{2m}}\left[  \exp\left(  \beta\left(  2m,m\right)  \left\vert u\right\vert
^{2}\right)  -1\right]  dx<\infty.
\]
Furthermore this inequality is sharp, i.e., if $\beta(2m,m)$ is replaced by
any $\beta>\beta(2m,m)$, then the supremum is infinite.
\end{theorem}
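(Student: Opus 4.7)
The strategy is to deduce Theorem 1.4 as an immediate corollary of Theorem 1.3 by selecting a specific set of weights $a_0,\dots,a_m$. Since the standard Sobolev norm is precisely the quadratic form that arises when all the weights are equal to $1$, no analysis beyond Theorem 1.3 itself should be needed.

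More concretely, squaring the Sobolev norm gives
\[
\|u\|_{W^{m,2}}^{2} \;=\; \|u\|_2^{2} + \sum_{j=1}^{m} \|\nabla^{j} u\|_2^{2} \;=\; \int_{\mathbb{R}^{2m}} \sum_{j=0}^{m} |\nabla^{j} u|^{2}\, dx,
\]
with the convention $\nabla^{0} u = u$. Choosing $a_0 = a_1 = \cdots = a_m = 1$ in the admissibility condition of Theorem 1.3, the quantity $\int_{\mathbb{R}^{2m}} \sum_{j=0}^{m} a_{m-j} |\nabla^{j}u|^{2}\, dx$ coincides with $\|u\|_{W^{m,2}}^{2}$ (the reindexing $j \mapsto m-j$ is a bijection on $\{0,\dots,m\}$). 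Hence the two constraint sets $\{\|u\|_{W^{m,2}} \leq 1\}$ and $\{\int \sum_{j=0}^{m} a_{m-j}|\nabla^{j} u|^{2}\, dx \leq 1\}$ are literally the same subset of $W^{m,2}(\mathbb{R}^{2m})$.

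From this identification the finiteness statement in Theorem 1.4 is exactly Theorem 1.3 applied with these weights. For the sharpness assertion I would appeal again to Theorem 1.3, whose sharpness is stated for \emph{every} admissible choice of $a_0 = 1, a_1,\dots,a_m > 0$ and therefore in particular for the all-ones choice: for any $\beta > \beta(2m,m)$, the sequence $(u_{k})$ that Theorem 1.3 provides in this case satisfies $\|u_{k}\|_{W^{m,2}} \leq 1$ and $\int_{\mathbb{R}^{2m}} [\exp(\beta |u_{k}|^{2}) - 1]\, dx \to \infty$, which is the sharpness of Theorem 1.4.

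The main obstacle is thus entirely upstream, in Theorem 1.3 (and the Theorems 1.2 and the comparison lemmas it relies on): the reduction from the full weighted norm $\int \sum a_{m-j} |\nabla^{j} u|^{2}\, dx$ to an equivalent norm built from $(-\Delta + \tau I)^{k} u$ via Lemmas 2.1--2.2, the sharp Adams-type bound of Theorem 1.2, and the Moser-type extremal sequences needed for sharpness. The step from Theorem 1.3 to Theorem 1.4 itself is a clean specialization and presents no additional analytic difficulty.
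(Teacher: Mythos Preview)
Your proposal is correct and matches the paper's approach exactly: the paper also deduces Theorem 1.4 from Theorem 1.3 by choosing $a_0=a_1=\cdots=a_m=1$, with the sharpness inherited from that of Theorem 1.3.
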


Since the fact that if $u\in W_{0}^{m,\frac{n}{m}}\left(  \Omega\right)  $ or
$u\in W^{m,\frac{n}{m}}\left(
%TCIMACRO{\U{211d} }%
%BeginExpansion
\mathbb{R}
%EndExpansion
^{n}\right)  $, then $\left\Vert u\right\Vert _{W^{m,\frac{n}{m}}}\leq$
$\left\Vert u\right\Vert _{m,n}$, our result is stronger than the one in
\cite{RS} in the case $m$ is even$.$ Moreover, our theorems still hold when
$m$ is odd.

We organize this paper as follows: In Section 2, we provide some
preliminaries. We build an iterated comparison in Section 3 and use it to
prove the Adams type inequalities  (Theorem 1.2, Theorem 1.3 and Theorem 1.4) for the case $n=2m=4k,~k\in%
%TCIMACRO{\U{2115} }%
%BeginExpansion
\mathbb{N}
%EndExpansion
$, namely when $m$ is \textbf{even} in Section 4. Section 5 is devoted to
proving Theorems 1.2, 1.3 and 1.4 when $n=2m=4k+2$, namely when $m$ is \textbf{odd}.
In fact, we will first prove these theorems in the special case when $n=2m=6$.
Then we will prove these theorems in the general case $n=2m=2(2k+1)$.
Finally, the Adams-type inequality when $m$ is odd in general (Theorem 1.1) is
proved in Section 6.

\section{Preliminaries}

In this section, we provide some preliminaries. For $u\in W^{m,2}\left(
%TCIMACRO{\U{211d} }%
%BeginExpansion
\mathbb{R}
%EndExpansion
^{2m}\right)  $ with $1\leq p<\infty$, we will denote by $\nabla^{j}u$,
$j\in\left\{  1,2,...,m\right\}  $, the $j-th$ order gradient of $u$, namely
\[
\nabla^{j}u=\left\{
\begin{array}
[c]{l}%
\bigtriangleup^{\frac{j}{2}}u\text{ \ \ \ \ \ \ }\mathrm{for}%
\,\,\,j\,\,\mathrm{even}\\
\nabla\bigtriangleup^{\frac{j-1}{2}}u\,\,\ \mathrm{for}\,\,\,j\,\,\mathrm{odd}%
\end{array}
\right.  .
\]

For $m=2k,~k\in%
%TCIMACRO{\U{2115} }%
%BeginExpansion
\mathbb{N}
%EndExpansion
$, $\tau>0$, we have the following observations:
\begin{equation}
\left(  -\Delta+\tau I\right)  ^{k}u=%
%TCIMACRO{\dsum \limits_{i=0}^{k}}%
%BeginExpansion
{\displaystyle\sum\limits_{i=0}^{k}}
%EndExpansion
\left(  -1\right)  ^{k-i}\binom{k}{i}\tau^{i}\Delta^{k-i}u \label{2.01}%
\end{equation}
where
\[
\binom{k}{j}=\frac{k!}{j!(k-j)!}.
\]
Thus
\begin{align*}
\int_{%
%TCIMACRO{\U{211d} }%
%BeginExpansion
\mathbb{R}
%EndExpansion
^{2m}}\left\vert \left(  -\Delta+\tau I\right)  ^{k}u\right\vert ^{2}dx  &
=\int_{%
%TCIMACRO{\U{211d} }%
%BeginExpansion
\mathbb{R}
%EndExpansion
^{2m}}\left\vert
%TCIMACRO{\dsum \limits_{i=0}^{k}}%
%BeginExpansion
{\displaystyle\sum\limits_{i=0}^{k}}
%EndExpansion
\left(  -1\right)  ^{k-i}\binom{k}{i}\tau^{i}\Delta^{k-i}u\right\vert ^{2}dx\\
&  =\int_{%
%TCIMACRO{\U{211d} }%
%BeginExpansion
\mathbb{R}
%EndExpansion
^{2m}}%
%TCIMACRO{\dsum \limits_{0\leq i,j\leq k}}%
%BeginExpansion
{\displaystyle\sum\limits_{0\leq i,j\leq k}}
%EndExpansion
\left(  -1\right)  ^{k-i}\left(  -1\right)  ^{k-j}\binom{k}{i}\binom{k}{j}%
\tau^{i}\tau^{j}\Delta^{k-i}u\Delta^{k-j}udx\\
&  =%
%TCIMACRO{\dsum \limits_{s=0}^{2k}}%
%BeginExpansion
{\displaystyle\sum\limits_{s=0}^{2k}}
%EndExpansion%
%TCIMACRO{\dsum \limits_{i+j=s}}%
%BeginExpansion
{\displaystyle\sum\limits_{i+j=s}}
%EndExpansion
\left(  -1\right)  ^{k-i}\left(  -1\right)  ^{k-j}\binom{k}{i}\binom{k}{j}%
\tau^{i}\tau^{j}\int_{%
%TCIMACRO{\U{211d} }%
%BeginExpansion
\mathbb{R}
%EndExpansion
^{2m}}\Delta^{k-i}u\Delta^{k-j}udx\\
&  =%
%TCIMACRO{\dsum \limits_{s=0}^{2k}}%
%BeginExpansion
{\displaystyle\sum\limits_{s=0}^{2k}}
%EndExpansion%
%TCIMACRO{\dsum \limits_{i+j=s}}%
%BeginExpansion
{\displaystyle\sum\limits_{i+j=s}}
%EndExpansion
\binom{k}{i}\binom{k}{j}\tau^{s}\int_{%
%TCIMACRO{\U{211d} }%
%BeginExpansion
\mathbb{R}
%EndExpansion
^{2m}}\left\vert \nabla^{2k-s}u\right\vert ^{2}dx.
\end{align*}
From the coefficients of $x^{s}$ in the identity
\[
\left(  1+x\right)  ^{k}(1+x)^{k}=(1+x)^{2k}%
\]
we have
\[%
%TCIMACRO{\dsum \limits_{i+j=s}}%
%BeginExpansion
{\displaystyle\sum\limits_{i+j=s}}
%EndExpansion
\binom{k}{i}\binom{k}{j}=\binom{2k}{s}%
\]
and then
\begin{equation}
\int_{%
%TCIMACRO{\U{211d} }%
%BeginExpansion
\mathbb{R}
%EndExpansion
^{2m}}\left\vert \left(  -\Delta+\tau I\right)  ^{k}u\right\vert ^{2}dx=\int_{%
%TCIMACRO{\U{211d} }%
%BeginExpansion
\mathbb{R}
%EndExpansion
^{2m}}\left(
%TCIMACRO{\dsum \limits_{j=0}^{m}}%
%BeginExpansion
{\displaystyle\sum\limits_{j=0}^{m}}
%EndExpansion
\binom{m}{j}\tau^{m-j}\left\vert \nabla^{j}u\right\vert ^{2}\right)  dx.
\label{2.02}%
\end{equation}

From these observations, we have when $m=2k,$ $k\in%
%TCIMACRO{\U{2115} }%
%BeginExpansion
\mathbb{N}
%EndExpansion
:$
\begin{equation}
\left\Vert \left(  -\Delta+\tau I\right)  ^{k}u\right\Vert _{2}=\left[
%TCIMACRO{\dsum \limits_{j=0}^{m}}%
%BeginExpansion
{\displaystyle\sum\limits_{j=0}^{m}}
%EndExpansion
\binom{m}{j}\tau^{m-j}\left\Vert \nabla^{j}u\right\Vert _{2}^{2}\right]
^{1/2}. \label{2.003}%
\end{equation}

From (\ref{2.01}), (\ref{2.02}) and (\ref{2.003}), we have

\begin{lemma}
Assume $m=2k,$ $k\in%
%TCIMACRO{\U{2115} }%
%BeginExpansion
\mathbb{N}
%EndExpansion
$. Let $a_{0}=1,a_{1},...,a_{m}>0$. There exists a real number $\tau>0$ such
that for all $u\in W^{m,2}\left(
%TCIMACRO{\U{211d} }%
%BeginExpansion
\mathbb{R}
%EndExpansion
^{2m}\right)  :$
\[
\left\Vert \left(  -\Delta+\tau I\right)  ^{k}u\right\Vert _{2}^{2}\leq%
%TCIMACRO{\dsum \limits_{j=0}^{m}}%
%BeginExpansion
{\displaystyle\sum\limits_{j=0}^{m}}
%EndExpansion
a_{m-j}\left\Vert \nabla^{j}u\right\Vert _{2}^{2}%
\]

\end{lemma}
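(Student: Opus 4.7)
The plan is to read off the inequality directly from the explicit expansion in (\ref{2.003}) and then choose $\tau$ small enough to dominate each coefficient by the corresponding $a_{m-j}$.

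More concretely, by (\ref{2.003}) one has, for every $\tau > 0$ and every $u \in W^{m,2}(\mathbb{R}^{2m})$,
\[
\bigl\|(-\Delta + \tau I)^k u\bigr\|_2^2 = \sum_{j=0}^m \binom{m}{j}\tau^{m-j}\|\nabla^j u\|_2^2.
\]
Comparing this with the right-hand side $\sum_{j=0}^m a_{m-j}\|\nabla^j u\|_2^2$, it suffices to find $\tau > 0$ such that $\binom{m}{j}\tau^{m-j} \le a_{m-j}$ for every $j = 0, 1, \dots, m$.

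The top-order term $j = m$ is automatic, since $\binom{m}{m}\tau^0 = 1 = a_0$. For each $j \in \{0, 1, \dots, m-1\}$ the inequality $\binom{m}{j}\tau^{m-j} \le a_{m-j}$ is equivalent to
\[
\tau \le \left( \frac{a_{m-j}}{\binom{m}{j}} \right)^{\!1/(m-j)},
\]
and, since $a_{m-j} > 0$, the right-hand side is a strictly positive real number. Consequently it is enough to take
\[
\tau := \min_{0 \le j \le m-1} \left( \frac{a_{m-j}}{\binom{m}{j}} \right)^{\!1/(m-j)} > 0,
\]
and summing the term-by-term inequalities over $j$ yields the desired bound.

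There is no substantive obstacle here: the lemma is essentially a bookkeeping consequence of the binomial identity (\ref{2.003}), with the slight subtlety that the coefficient $a_0 = 1$ is chosen precisely to match the leading coefficient of $(-\Delta + \tau I)^k$, so that no condition needs to be imposed on $\tau$ from the highest-order term. The only work is to verify that the finitely many remaining conditions on $\tau$ are compatible, which they are because each of them defines a nonempty open interval $(0, c_j]$ with $c_j > 0$.
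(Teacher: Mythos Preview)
Your proof is correct and follows exactly the same approach as the paper's: both reduce the inequality to the coefficient-wise condition $\binom{m}{j}\tau^{m-j}\le a_{m-j}$ via the expansion (\ref{2.003}), and choose $\tau>0$ small enough to satisfy all of these simultaneously. The only difference is that you spell out explicitly why such a $\tau$ exists (by taking the minimum of the finitely many positive upper bounds), whereas the paper simply asserts that one should ``choose $\tau>0$ such that $\binom{m}{j}\tau^{m-j}\le a_{m-j}$''.
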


\begin{proof}
We just need to choose $\tau>0$ such that
\[
\binom{m}{j}\tau^{m-j}\leq a_{m-j},~j=0,1,...,m.
\]

\end{proof}

When $m=2k+1,~k\in%
%TCIMACRO{\U{2115} }%
%BeginExpansion
\mathbb{N}
%EndExpansion
$, we have%
\begin{equation}
\nabla\left(  -\Delta+\tau I\right)  ^{k}u=%
%TCIMACRO{\dsum \limits_{i=0}^{k}}%
%BeginExpansion
{\displaystyle\sum\limits_{i=0}^{k}}
%EndExpansion
\left(  -1\right)  ^{k-i}\binom{k}{i}\tau^{i}\nabla\Delta^{k-i}u \label{2.04}%
\end{equation}
where
\[
\binom{k}{j}=\frac{k!}{j!(k-j)!}.
\]
Similarly, we can prove that
\begin{equation}
\int_{%
%TCIMACRO{\U{211d} }%
%BeginExpansion
\mathbb{R}
%EndExpansion
^{2m}}\left\vert \nabla\left(  -\Delta+\tau I\right)  ^{k}u\right\vert
^{2}dx=\int_{%
%TCIMACRO{\U{211d} }%
%BeginExpansion
\mathbb{R}
%EndExpansion
^{2m}}\left(
%TCIMACRO{\dsum \limits_{j=1}^{m}}%
%BeginExpansion
{\displaystyle\sum\limits_{j=1}^{m}}
%EndExpansion
\binom{m-1}{j-1}\tau^{m-j}\left\vert \nabla^{j}u\right\vert ^{2}\right)  dx.
\label{2.05}%
\end{equation}

Thus, we have for $m=2k+1,$ $k\in%
%TCIMACRO{\U{2115} }%
%BeginExpansion
\mathbb{N}
%EndExpansion
:$
\begin{equation}
\left\Vert \nabla\left(  -\Delta+\tau I\right)  ^{k}u\right\Vert _{2}=\left[
%TCIMACRO{\dsum \limits_{j=1}^{m}}%
%BeginExpansion
{\displaystyle\sum\limits_{j=1}^{m}}
%EndExpansion
\binom{m-1}{j-1}\tau^{m-j}\left\Vert \nabla^{j}u\right\Vert _{2}^{2}\right]
^{1/2}. \label{2.06}%
\end{equation}
By (\ref{2.003}) and (\ref{2.06}), we get
\begin{equation}
\left\Vert \nabla\left(  -\Delta+\tau I\right)  ^{k}u\right\Vert _{2}^{2}%
+\tau\left\Vert \left(  -\Delta+\tau I\right)  ^{k}u\right\Vert _{2}^{2}=%
%TCIMACRO{\dsum \limits_{j=0}^{m}}%
%BeginExpansion
{\displaystyle\sum\limits_{j=0}^{m}}
%EndExpansion
\binom{m}{j}\tau^{m-j}\left\Vert \nabla^{j}u\right\Vert _{2}^{2}. \label{2.07}%
\end{equation}

\begin{lemma}
Assume $m=2k+1,~k\in%
%TCIMACRO{\U{2115} }%
%BeginExpansion
\mathbb{N}
%EndExpansion
$. Let $a_{0}=1,a_{1},...,a_{m}>0$. There exists a real number $\tau>0$ such
that for all $u\in W^{m,2}\left(
%TCIMACRO{\U{211d} }%
%BeginExpansion
\mathbb{R}
%EndExpansion
^{2m}\right)  :$
\[
\left\Vert \nabla\left(  -\Delta+\tau I\right)  ^{k}u\right\Vert _{2}^{2}%
+\tau\left\Vert \left(  -\Delta+\tau I\right)  ^{k}u\right\Vert _{2}^{2}\leq%
%TCIMACRO{\dsum \limits_{j=0}^{m}}%
%BeginExpansion
{\displaystyle\sum\limits_{j=0}^{m}}
%EndExpansion
a_{m-j}\left\Vert \nabla^{j}u\right\Vert _{2}^{2}%
\]

\end{lemma}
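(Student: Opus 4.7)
The plan is to imitate the proof of Lemma 2.1, reducing the lemma to a coefficient-wise comparison enabled by the identity (\ref{2.07}) derived immediately above the statement. That identity expresses the combined norm as a weighted sum of the individual seminorms $\|\nabla^j u\|_2^2$, which converts the operator inequality into a finite list of elementary inequalities among the coefficients.

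Concretely, I would begin by invoking (\ref{2.07}):
\[
\left\Vert \nabla\left(  -\Delta+\tau I\right)  ^{k}u\right\Vert _{2}^{2}
+\tau\left\Vert \left(  -\Delta+\tau I\right)  ^{k}u\right\Vert _{2}^{2}=
\sum_{j=0}^{m}\binom{m}{j}\tau^{m-j}\left\Vert \nabla^{j}u\right\Vert _{2}^{2}.
\]
Since the right-hand side is a nonnegative linear combination of the quantities $\|\nabla^{j}u\|_{2}^{2}$, the desired bound $\sum_{j=0}^{m} a_{m-j}\|\nabla^{j}u\|_{2}^{2}$ will follow at once if I can pick $\tau>0$ satisfying
\[
\binom{m}{j}\tau^{m-j}\leq a_{m-j}\qquad\text{for every }j=0,1,\ldots,m.
\]

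The case $j=m$ is automatic because $\binom{m}{m}\tau^{0}=1=a_{0}$. For each $j\in\{0,1,\ldots,m-1\}$ the exponent $m-j$ is at least $1$, so the condition is equivalent to $\tau\leq\bigl(a_{m-j}/\binom{m}{j}\bigr)^{1/(m-j)}$. Taking
\[
\tau:=\min_{0\le j\le m-1}\left(\frac{a_{m-j}}{\binom{m}{j}}\right)^{\!1/(m-j)}>0
\]
simultaneously fulfills all the required inequalities, which completes the argument.

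There is essentially no obstacle here: the only nontrivial input is the algebraic identity (\ref{2.07}), which the authors have already established using the binomial expansion of $(1+x)^{k-1}(1+x)^{k}=(1+x)^{2k-1}$ (the odd analogue of the computation in Lemma 2.1). Once (\ref{2.07}) is in hand, the proof is a one-line choice of $\tau$, and the hypothesis $a_{0}=1$ is exactly what makes the $j=m$ coefficient match with equality.
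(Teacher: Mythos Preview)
Your proof is correct and matches the paper's approach exactly: the paper also simply invokes (\ref{2.07}) and then chooses $\tau>0$ so that $\binom{m}{j}\tau^{m-j}\le a_{m-j}$ for every $j$. (Your parenthetical about $(1+x)^{k-1}(1+x)^{k}$ slightly misidentifies the combinatorial step behind (\ref{2.07})---it is really Pascal's rule applied after reusing the Vandermonde identity from the even case---but since you only cite (\ref{2.07}) as already established, this has no bearing on the argument.)
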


\begin{proof}
Again, we just need to choose $\tau>0$ such that%
\[
\binom{m}{j}\tau^{m-j}\leq a_{m-j}.
\]

\end{proof}

In the general case, we have the following result

\begin{lemma}
Assume that $m$ is \textit{an odd integer less than }$n$: $m=2k+1$. There
exists a real number $C>0$ such that for all $u\in W^{m,\frac{n}{m}}\left(
%TCIMACRO{\U{211d} }%
%BeginExpansion
\mathbb{R}
%EndExpansion
^{n}\right)  :$
\[
\left\Vert \nabla^{m}u\right\Vert _{\frac{n}{m}}^{\frac{n}{m}}+\frac{1}{C}%
%TCIMACRO{\dsum \limits_{j=0}^{m-1}}%
%BeginExpansion
{\displaystyle\sum\limits_{j=0}^{m-1}}
%EndExpansion
\left\Vert \nabla^{j}u\right\Vert _{\frac{n}{m}}^{\frac{n}{m}}\leq\left\Vert
\nabla\left(  -\Delta+I\right)  ^{k}u\right\Vert _{\frac{n}{m}}^{\frac{n}{m}%
}+\left\Vert \left(  -\Delta+I\right)  ^{k}u\right\Vert _{\frac{n}{m}}%
^{\frac{n}{m}}.
\]

\end{lemma}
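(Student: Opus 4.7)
The plan is to reduce the statement to the Mikhlin--H\"ormander multiplier theorem, which is available since $p:=n/m$ lies in $(1,\infty)$ by the hypothesis $m<n$. Writing $U:=(-\Delta+I)^{k}u$ and expanding $(-\Delta+I)^{k}$ by the binomial formula as in \eqref{2.01} together with its gradient counterpart \eqref{2.04} (both with $\tau=1$), one obtains, for every $0\le l\le k$, the identities $\Delta^{l}u=(-1)^{l}T_{l}U$ and $\nabla\Delta^{l}u=(-1)^{l}T_{l}\nabla U$ (componentwise), where $T_{l}$ is the Fourier multiplier with symbol $m_{l}(\xi)=|\xi|^{2l}/(|\xi|^{2}+1)^{k}$. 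Each $m_{l}$ is smooth on $\mathbb{R}^{n}\setminus\{0\}$, bounded by $1$, and satisfies $|\partial^{\alpha}m_{l}(\xi)|\lesssim|\xi|^{-|\alpha|}$, so by Mikhlin each $T_{l}$ is bounded on $L^{p}(\mathbb{R}^{n})$ for every $1<p<\infty$. Applied to even- and odd-order derivatives separately, this yields $\|\nabla^{2l}u\|_{p}\lesssim\|U\|_{p}$ and $\|\nabla^{2l+1}u\|_{p}\lesssim\|\nabla U\|_{p}$ for $0\le l\le k$, whence the background norm equivalence $\sum_{j=0}^{m}\|\nabla^{j}u\|_{p}^{p}\lesssim\|U\|_{p}^{p}+\|\nabla U\|_{p}^{p}$.

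To extract the exact coefficient $1$ on $\|\nabla^{m}u\|_{p}^{p}$ required by the lemma, I would isolate the top-order piece via the algebraic identity
\[
(-1)^{k}\nabla\Delta^{k}u=\nabla(-\Delta+I)^{k}u-\sum_{i=0}^{k-1}(-1)^{i}\binom{k}{i}\nabla\Delta^{i}u,
\]
take $L^{p}$ norms, apply Minkowski, and then use $(a+b)^{p}\le(1+\varepsilon)a^{p}+C_{\varepsilon}b^{p}$ for a small $\varepsilon>0$ to arrive at
\[
\|\nabla^{m}u\|_{p}^{p}\le(1+\varepsilon)\|\nabla(-\Delta+I)^{k}u\|_{p}^{p}+C_{\varepsilon}\sum_{l\,\mathrm{odd},\,l<m}\|\nabla^{l}u\|_{p}^{p}.
\]
The $C_{\varepsilon}$-remainder is absorbed into $\tfrac{1}{C}\sum_{j<m}\|\nabla^{j}u\|_{p}^{p}$ on the left by choosing $C$ large (depending on $\varepsilon$, $n$, $m$, and the Mikhlin constants), while the residual $\varepsilon\|\nabla U\|_{p}^{p}$ is absorbed against the free $\|U\|_{p}^{p}$-slack on the right via the Mikhlin estimates from the first step.

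The main obstacle is securing coefficient exactly $1$ on $\|\nabla^{m}u\|_{p}^{p}$ when $p\ne 2$. In the Plancherel case $p=2$ (i.e., $n=2m$, the setting of the earlier lemmas) the inequality is immediate from the pointwise Fourier bound $|\xi|^{2}(|\xi|^{2}+1)^{2k}\ge|\xi|^{4k+2}$, but no such pointwise-to-$L^{p}$ passage is available otherwise, and a direct Mikhlin estimate $\|\nabla^{m}u\|_{p}\le C_{\ast}\|\nabla U\|_{p}$ generally carries a constant $C_{\ast}>1$ that the right-hand side of the lemma cannot afford. The asymmetric architecture of the target inequality---coefficient $1$ on the leading derivative but only $\tfrac{1}{C}$ on the subleading ones---is precisely what permits this $(1+\varepsilon)$-type defect to be routed into the lower-order derivatives, which are themselves dominated by the right-hand side through the Mikhlin step.
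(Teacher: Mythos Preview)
The paper states this lemma without proof, so there is no ``paper's approach'' to compare against; your Mikhlin-based strategy is a natural one, and the first paragraph is correct. In fact you can say more: for every $j<m$ (odd or even) the symbol relating $\nabla^{j}u$ to $U=(-\Delta+I)^{k}u$ is $i^{\,j}\xi^{\alpha}|\xi|^{2\lfloor j/2\rfloor}/(|\xi|^{2}+1)^{k}$, which is bounded because its homogeneity degree is at most $2k-1<2k$; hence all lower-order terms obey $\|\nabla^{j}u\|_{p}\lesssim\|U\|_{p}$, not merely $\lesssim\|\nabla U\|_{p}$. This sharpens your background estimate to $\sum_{j<m}\|\nabla^{j}u\|_{p}^{p}\lesssim\|U\|_{p}^{p}$.

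The genuine gap is in your second paragraph. After the $(1+\varepsilon)$ splitting you arrive at
\[
\|\nabla^{m}u\|_{p}^{p}\le(1+\varepsilon)\|\nabla U\|_{p}^{p}+C_{\varepsilon}\sum_{l<k}\|\nabla^{2l+1}u\|_{p}^{p},
\]
and you then claim the excess $\varepsilon\|\nabla U\|_{p}^{p}$ can be ``absorbed against the free $\|U\|_{p}^{p}$-slack on the right via the Mikhlin estimates from the first step.'' But no Mikhlin estimate produces a bound of the form $\|\nabla U\|_{p}\lesssim\|U\|_{p}$: on $\mathbb{R}^{n}$ the ratio $\|\nabla U\|_{p}/\|U\|_{p}$ is unbounded (rescale $U(\lambda x)$ with $\lambda\to\infty$). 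So the $\varepsilon\|\nabla U\|_{p}^{p}$ overshoot cannot be traded for $\|U\|_{p}^{p}$, and the inequality you obtain,
\[
\|\nabla^{m}u\|_{p}^{p}+\tfrac{1}{C}\sum_{j<m}\|\nabla^{j}u\|_{p}^{p}\le(1+\varepsilon)\|\nabla U\|_{p}^{p}+C''\|U\|_{p}^{p},
\]
never collapses to the target no matter how $C$ and $\varepsilon$ are chosen. The difficulty is real: the multiplier $T_{k}$ with symbol $|\xi|^{2k}/(|\xi|^{2}+1)^{k}$ equals $(I-(I-\Delta)^{-1})^{k}$, and already $I-(I-\Delta)^{-1}$ has $L^{1}\to L^{1}$ norm $2$ (take $f$ an approximate $\delta$), so $T_{k}$ is \emph{not} an $L^{p}$-contraction for $p\neq2$. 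Any correct argument for the exact coefficient $1$ must therefore exploit the \emph{joint} right-hand side $\|\nabla U\|_{p}^{p}+\|U\|_{p}^{p}$ rather than bounding $\|\nabla^{m}u\|_{p}$ by $\|\nabla U\|_{p}$ alone and then adding lower-order corrections; your route does not do this.
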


We now introduce the Sobolev space of functions with homogeneous Navier
boundary conditions:%
\[
W_{N}^{m,2}\left(  B_{R}\right)  :=\left\{  u\in W^{m,2}:\Delta^{j}u=0\text{
on }\partial B_{R}\text{ for }0\leq j\leq\left[  \frac{m-1}{2}\right]
\right\}
\]
where $B_{R}=\left\{  x\in%
%TCIMACRO{\U{211d} }%
%BeginExpansion
\mathbb{R}
%EndExpansion
^{2m}:\left\vert x\right\vert <R\right\}  .$ It is easy to see that
$W_{N}^{m,2}\left(  B_{R}\right)  $ contains $W_{0}^{m,2}\left(  B_{R}\right)
$ as a closed subspace. Also, we define
\begin{align*}
W_{rad}^{m,2}\left(  B_{R}\right)   &  :=\left\{  u\in W^{m,2}%
:u(x)=u(\left\vert x\right\vert )\text{ a.e. in }B_{R}\right\}  ,\\
W_{N,rad}^{m,2}\left(  B_{R}\right)   &  =W_{N}^{m,2}\left(  B_{R}\right)
\cap W_{rad}^{m,2}\left(  B_{R}\right)  .
\end{align*}

Finally, we give some radial lemmas which will be used in our proofs (see
\cite{BL, K, RS}):

\begin{lemma}
If $u\in W^{1,\frac{n}{m}}\left(
%TCIMACRO{\U{211d} }%
%BeginExpansion
\mathbb{R}
%EndExpansion
^{n}\right)  $ then
\[
\left\vert u(x)\right\vert \leq\left(  \frac{1}{m\sigma_{n}}\right)
^{\frac{m}{n}}\frac{1}{\left\vert x\right\vert ^{\frac{n-1}{n}m}}\left\Vert
u\right\Vert _{W^{1,\frac{n}{m}}}%
\]
for a.e. $x\in%
%TCIMACRO{\U{211d} }%
%BeginExpansion
\mathbb{R}
%EndExpansion
^{n}$, where $\sigma_{n}$ is the volume of the unit ball in $%
%TCIMACRO{\U{211d} }%
%BeginExpansion
\mathbb{R}
%EndExpansion
^{n}$.
\end{lemma}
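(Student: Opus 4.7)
The inequality as stated is the classical Berestycki--Lions/Strauss radial lemma; it fails for general non-radial functions (a bump supported far from the origin makes the right-hand side arbitrarily small), so I read the statement as pertaining to radial $u$, consistent with the section heading ``radial lemmas'' and the references \cite{BL, K, RS}. Writing $p = n/m$, by density of $C_c^\infty(\mathbb{R}^n)$ in $W^{1,p}(\mathbb{R}^n)$ it suffices to prove the estimate for radial $u \in C_c^\infty(\mathbb{R}^n)$; replacing $u$ by $|u|$ I may further assume $u \geq 0$ and write $u = u(r)$ for $r = |x|$.

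The heart of the argument is a short three-step computation. First, by the fundamental theorem of calculus on the radial profile (using $u(s) \to 0$ at infinity), $u(r)^p = -p\int_r^\infty u(s)^{p-1} u'(s)\,ds$, so $u(r)^p \leq p\int_r^\infty u(s)^{p-1}|u'(s)|\,ds$. Second, Hölder's inequality with conjugate exponents $p/(p-1)$ and $p$ gives $u(r)^p \leq p\bigl(\int_r^\infty u^p\,ds\bigr)^{(p-1)/p}\bigl(\int_r^\infty |u'|^p\,ds\bigr)^{1/p}$. Third, since $s^{n-1} \geq r^{n-1}$ for $s \geq r$, each unweighted line integral is controlled by the corresponding volume integral at the cost of a factor $r^{1-n}/(n\sigma_n)$: $\int_r^\infty u(s)^p\,ds \leq \frac{r^{1-n}}{n\sigma_n}\|u\|_p^p$, and similarly for $|u'|^p$ using $|\nabla u(x)| = |u'(|x|)|$ for radial $u$. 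Combining yields the key estimate $u(r)^p \leq \frac{p}{n\sigma_n}\, r^{1-n}\, \|u\|_p^{p-1}\|\nabla u\|_p$.

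It remains to absorb this product into $\|u\|_{W^{1,p}}^p = (\|u\|_p + \|\nabla u\|_p)^p$. For $a,b \geq 0$ and $p \geq 1$, Young's inequality gives $a^{p-1}b \leq \frac{p-1}{p}a^p + \frac{1}{p}b^p \leq a^p + b^p$, while the scalar inequality $(1+x)^p \geq 1+x^p$ for $x \geq 0$ yields $(a+b)^p \geq a^p + b^p$; combining, $a^{p-1}b \leq (a+b)^p$. Hence $u(r)^p \leq \frac{p}{n\sigma_n}\,r^{1-n}\,\|u\|_{W^{1,p}}^p$, and taking $p$-th roots with $p = n/m$---so that $\bigl(p/(n\sigma_n)\bigr)^{1/p} = \bigl(1/(m\sigma_n)\bigr)^{m/n}$ and $(1-n)/p = -(n-1)m/n$---gives exactly the claimed estimate. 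There is no substantive obstacle; this is a one-page classical computation. The only conceptual point worth flagging is Step 3, where the monotonicity $s^{n-1} \geq r^{n-1}$ on $[r,\infty)$ is precisely what converts a one-dimensional integral into a genuine $L^p(\mathbb{R}^n)$-norm and produces the decisive decay factor $r^{1-n}$, which after extraction of the $p$-th root yields the exponent $(n-1)m/n$.
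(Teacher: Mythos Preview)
The paper does not supply its own proof of this lemma; it merely records the statement and cites \cite{BL, K, RS}. Your argument is correct and is precisely the classical Strauss/Berestycki--Lions computation from those references: integrate the radial profile from $r$ to $\infty$, apply H\"older, and trade the one-dimensional integrals for $L^{p}(\mathbb{R}^{n})$ norms via $s^{n-1}\geq r^{n-1}$ on $[r,\infty)$. Your remark that the inequality requires radiality is also to the point---the paper's phrasing omits this hypothesis, but the heading ``radial lemmas'' and every application in the paper (to the radially symmetric $v_{l}$) make the intended reading unambiguous. One minor note: the paper uses the norm $\|u\|_{W^{1,p}}=\bigl(\|u\|_{p}^{p}+\|\nabla u\|_{p}^{p}\bigr)^{1/p}$ (see the explicit computations of $\|v_{l}\|_{W^{1,2}}$ in Sections~4--6), so your Young step $a^{p-1}b\leq a^{p}+b^{p}$ already lands on $\|u\|_{W^{1,p}}^{p}$ without the further passage to $(a+b)^{p}$.
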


\begin{lemma}
If $u\in L^{p}\left(
%TCIMACRO{\U{211d} }%
%BeginExpansion
\mathbb{R}
%EndExpansion
^{n}\right)  ,~1\leq p<\infty,$ is a radial non-increasing function, then
\[
\left\vert u(x)\right\vert \leq\left(  \frac{n}{\omega_{n-1}}\right)
^{\frac{1}{p}}\frac{1}{\left\vert x\right\vert ^{\frac{n}{p}}}\left\Vert
u\right\Vert _{L^{p}\left(
%TCIMACRO{\U{211d} }%
%BeginExpansion
\mathbb{R}
%EndExpansion
^{n}\right)  }%
\]
for a.e. $x\in%
%TCIMACRO{\U{211d} }%
%BeginExpansion
\mathbb{R}
%EndExpansion
^{n}$.
\end{lemma}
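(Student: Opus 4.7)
The plan is to exploit the monotonicity of $u$ in a direct way: use the fact that $u$ attains its largest values near the origin to turn the $L^p$ norm on the ball $B_{|x|}$ into a lower bound of the form ``value at $x$ times volume of the ball''. Concretely, fix $x\in\mathbb{R}^n\setminus\{0\}$ at which $u$ has a well-defined (radial, monotone) representative. Since $u$ is radial and non-increasing, for every $y$ with $|y|\le|x|$ one has $|u(y)|\ge|u(x)|$. Therefore
\[
\|u\|_{L^p(\mathbb{R}^n)}^{p}\ \ge\ \int_{B_{|x|}}|u(y)|^{p}\,dy\ \ge\ |u(x)|^{p}\cdot\bigl|B_{|x|}\bigr|.
\]

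Using $|B_{|x|}|=\sigma_{n}|x|^{n}=\frac{\omega_{n-1}}{n}|x|^{n}$, one rearranges the display to obtain
\[
|u(x)|^{p}\ \le\ \frac{n}{\omega_{n-1}}\cdot\frac{\|u\|_{L^{p}(\mathbb{R}^{n})}^{p}}{|x|^{n}},
\]
and then takes $p$-th roots to get the claimed pointwise estimate. This holds for almost every $x\in\mathbb{R}^{n}$, since a radial non-increasing representative of an $L^p$-function is defined at every $|x|>0$ except possibly at the countable set of jump discontinuities of the one-dimensional profile.

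There is essentially no obstacle here: the argument is purely a one-line consequence of monotonicity together with the ball-volume formula, and it does not depend on $p$ beyond requiring $1\le p<\infty$ so that $\|u\|_{L^p}$ makes sense. The only mild point to check is the choice of representative, which is standard for radially symmetric $L^p$ functions. Thus the role of this lemma in what follows will be as a routine decay tool, applied together with Lemma 2.5 to control radial Sobolev functions at infinity.
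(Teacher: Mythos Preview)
Your argument is correct and is the standard proof of this radial decay estimate. The paper itself does not supply a proof of this lemma; it merely quotes it from the literature (see the references \cite{BL, K, RS} cited just before the statement), so there is no ``paper's own proof'' to compare against. One minor slip: in your closing remark you refer to this result being ``applied together with Lemma~2.5,'' but the present statement \emph{is} Lemma~2.5; you presumably meant Lemma~2.4.
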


\section{An iterated comparison principle}

\bigskip In this section, we still denote by $B_{R}$ the set $\left\{  x\in%
%TCIMACRO{\U{211d} }%
%BeginExpansion
\mathbb{R}
%EndExpansion
^{n}:~\left\vert x\right\vert <R\right\}  $ and $\left\vert B_{R}\right\vert $
the Lebesgue measure of $B_{R}$, namely $\left\vert B_{R}\right\vert
=\sigma_{n}R^{n}$ where $\sigma_{n}$ is the volume of the unit ball in $%
%TCIMACRO{\U{211d} }%
%BeginExpansion
\mathbb{R}
%EndExpansion
^{n}$. Let $u:B_{R}\rightarrow%
%TCIMACRO{\U{211d} }%
%BeginExpansion
\mathbb{R}
%EndExpansion
$ be a measurable function. The distribution function of $u$ is defined by
\[
\mu_{u}(t)=\left\vert \left\{  x\in B_{R}~|~\left\vert u(x)\right\vert
>t\right\}  \right\vert ,~\forall t\geq0.
\]
The decreasing rearrangement of $u$ is defined by
\[
u^{\ast}(s)=\inf\left\{  t\geq0:\mu_{u}(t)<s\right\}  ,~\forall s\in\left[
0,\left\vert B_{R}\right\vert \right]  ,
\]
and the spherically symmetric decreasing rearrangement of $u$ by%
\[
u^{\#}(x)=u^{\ast}\left(  \sigma_{n}\left\vert x\right\vert ^{n}\right)
~\forall x\in B_{R}.
\]
We have that $u^{\#}$ is the unique nonnegative integrable function which is
radially symmetric, nonincreasing and has the same distribution function as
$\left\vert u\right\vert .$

Let $\tau>0$ and $u$ be a weak solution of
\begin{equation}
\left\{
\begin{array}
[c]{c}%
-\Delta u+\tau u=f\text{ in }B_{R}\\
u\in W_{0}^{1,2}\left(  B_{R}\right)
\end{array}
\right.  \label{2.1}%
\end{equation}
where $f\in L^{\frac{2n}{n+2}}\left(  B_{R}\right)  $. We have the following
result that can be found in \cite{TV}:

\begin{proposition}
If $u$ is a nonnegative weak solution of (\ref{2.1}) then
\begin{equation}
-\frac{du^{\ast}}{ds}(s)\leq\frac{s^{\frac{2}{n}-2}}{n^{2}\sigma_{n}^{2/n}}%
%TCIMACRO{\dint \limits_{0}^{s}}%
%BeginExpansion
{\displaystyle\int\limits_{0}^{s}}
%EndExpansion
\left(  f^{\ast}-\tau u^{\ast}\right)  d\tau,~\forall s\in\left(  0,\left\vert
B_{R}\right\vert \right)  . \label{2.2}%
\end{equation}

\end{proposition}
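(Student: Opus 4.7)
The plan is to adapt the classical Talenti--Alvino--Lions--Trombetti symmetrization technique to the shifted operator $-\Delta + \tau I$. The key idea is to test the weak formulation of (\ref{2.1}) against truncations of $u$ itself, differentiate in the truncation level, and combine the resulting identity with the isoperimetric inequality applied to the superlevel sets of $u$. Concretely, first I would insert $\varphi = (u - t)_{+}$ with $t > 0$ into the weak form, obtaining
\[
\int_{\{u > t\}} |\nabla u|^{2}\, dx + \tau \int_{\{u > t\}} u(u - t)\, dx = \int_{\{u > t\}} f(u - t)\, dx.
\]
Writing $\mu(t) = |\{u > t\}|$ and differentiating in $t$---the boundary contributions vanish because $u - t = 0$ on $\{u = t\}$---yields the pointwise identity
\[
\int_{\{u = t\}} |\nabla u|\, d\mathcal{H}^{n-1} = \int_{\{u > t\}}(f - \tau u)\, dx
\]
for a.e.\ $t > 0$.

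The second step introduces the geometry. By the coarea formula one has $-\mu'(t) = \int_{\{u = t\}} |\nabla u|^{-1}\, d\mathcal{H}^{n-1}$, and Cauchy--Schwarz on the level set gives
\[
\bigl(P(\{u > t\})\bigr)^{2} \leq -\mu'(t) \int_{\{u = t\}} |\nabla u|\, d\mathcal{H}^{n-1}.
\]
Combining with the isoperimetric inequality $P(\{u > t\}) \geq n \sigma_{n}^{1/n} \mu(t)^{(n-1)/n}$ and the identity from the first step produces
\[
n^{2}\sigma_{n}^{2/n}\, \mu(t)^{2(n-1)/n} \leq -\mu'(t) \int_{\{u > t\}}(f - \tau u)\, dx.
\]
Passing to the variable $s = \mu(t)$ and using the relation $-du^{\ast}/ds(s) = 1/(-\mu'(t))$ transforms this into a pointwise inequality for $-du^{\ast}/ds$.

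Finally, to put the right-hand side into the form claimed in (\ref{2.2}) I would invoke two rearrangement facts: the Hardy--Littlewood inequality $\int_{\{u > t\}} f\, dx \leq \int_{0}^{s} f^{\ast}(\sigma)\, d\sigma$, and the \emph{equality} $\int_{\{u > t\}} u\, dx = \int_{0}^{s} u^{\ast}(\sigma)\, d\sigma$ (equimeasurability, since $u$ is integrated over its own superlevel set). Using also $s^{2/n - 2} = s^{-2(n-1)/n}$ then yields exactly the stated bound. The principal technical obstacle is a rigorous justification of the differentiation step on the set of critical values of $u$, where $|\nabla u|$ can vanish and the coarea/Cauchy--Schwarz arguments degenerate; the standard remedy, pursued in \cite{TV} and in Talenti's original work, is to approximate $(u - t)_{+}$ by smooth truncations and to exploit the absolute continuity of $t \mapsto \mu(t)$ off a null set of levels.
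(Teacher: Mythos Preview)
Your sketch is correct and is precisely the classical Talenti--Trombetti--V\'azquez symmetrization argument for second-order operators with lower-order terms. The paper does not actually prove this proposition; it simply quotes it from \cite{TV}, and your outline reproduces the argument carried out there.
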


Now, we consider the problem
\begin{equation}
\left\{
\begin{array}
[c]{c}%
-\Delta v+\tau v=f^{\#}\text{ in }B_{R}\\
v\in W_{0}^{1,2}\left(  B_{R}\right)
\end{array}
\right.  \label{2.3}%
\end{equation}
Due to the radial symmetry of the equation, the unique solution $v$ of
(\ref{2.3}) is radially symmetric and we have
\[
-\frac{d\widehat{v}}{ds}(s)=\frac{s^{\frac{2}{n}-2}}{n^{2}\sigma_{n}^{2/n}}%
%TCIMACRO{\dint \limits_{0}^{s}}%
%BeginExpansion
{\displaystyle\int\limits_{0}^{s}}
%EndExpansion
\left(  f^{\ast}-\tau\widehat{v}\right)  d\tau,~\forall s\in\left(
0,\left\vert B_{R}\right\vert \right)
\]
where $\widehat{v}\left(  \sigma_{n}\left\vert x\right\vert ^{n}\right)
:=v(x)$. We have the following comparison of integrals in balls that again can
be found in \cite{TV}:

\begin{proposition}
Let $u,~v$ be weak solutions of (\ref{2.1}) and (\ref{2.3}) respectively. For
every $r\in\left(  0,R\right)  $ we have%
\[
\int_{B_{r}}u^{\#}dx\leq\int_{B_{r}}vdx.
\]

\end{proposition}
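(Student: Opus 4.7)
The plan is to reduce the claim to a one-dimensional comparison for the anti-derivatives of $u^{\ast}$ and $\widehat{v}$. Since $f^{\#}$ is radially symmetric and nonincreasing, the unique weak solution $v$ of (\ref{2.3}) is radial and, by the maximum principle, radially nonincreasing, so $v=v^{\#}$ and $v(x)=\widehat{v}(\sigma_{n}|x|^{n})$. The coarea formula then gives
\[
\int_{B_{r}}u^{\#}\,dx=\int_{0}^{\sigma_{n}r^{n}}u^{\ast}(\sigma)\,d\sigma=:U(\sigma_{n}r^{n}),\qquad \int_{B_{r}}v\,dx=\int_{0}^{\sigma_{n}r^{n}}\widehat{v}(\sigma)\,d\sigma=:V(\sigma_{n}r^{n}),
\]
so the claim is equivalent to showing $\phi(s):=V(s)-U(s)\geq 0$ on $[0,|B_{R}|]$.

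To derive a differential inequality for $\phi$, I apply Proposition 3.1 to $u$ and note that the same derivation applied to the already radially-decreasing function $v$ gives the analogous relation with equality, because the Schwarz/isoperimetric estimates behind (\ref{2.2}) saturate on such profiles. Writing $A(s):=s^{2/n-2}/(n^{2}\sigma_{n}^{2/n})$ and $F(s):=\int_{0}^{s}f^{\ast}\,d\sigma$, I obtain
\[
-(u^{\ast})'(s)\leq A(s)\bigl[F(s)-\tau U(s)\bigr],\qquad -\widehat{v}'(s)=A(s)\bigl[F(s)-\tau V(s)\bigr].
\]
Subtracting yields the key second-order inequality
\[
\phi''(s)=\widehat{v}'(s)-(u^{\ast})'(s)\leq A(s)\,\tau\bigl(V(s)-U(s)\bigr)=A(s)\,\tau\,\phi(s).
\]

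The concluding step is a one-dimensional maximum principle. We have $\phi(0)=0$, and both $u^{\ast}$ and $\widehat{v}$ vanish at $s=|B_{R}|$ (since $u$ and $v$ have zero trace on $\partial B_{R}$ and $B_{R}$ has finite measure), so $\phi'(|B_{R}|)=0$. Suppose, for contradiction, that $\phi$ attains a negative minimum at some $s_{\ast}\in(0,|B_{R}|]$. If $s_{\ast}$ is interior, then $\phi''(s_{\ast})\geq 0$, contradicting $\phi''(s_{\ast})\leq A(s_{\ast})\tau\,\phi(s_{\ast})<0$. If $s_{\ast}=|B_{R}|$, then $\phi<0$ on some interval $(|B_{R}|-\varepsilon,|B_{R}|]$, so $\phi''<0$ there (strict concavity); combined with $\phi'(|B_{R}|)=0$, this concavity forces $\phi'\geq 0$ on the interval, so $\phi$ is nondecreasing. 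But $\phi(|B_{R}|)$ being the minimum then forces $\phi$ to be constant on the interval, contradicting $\phi''<0$. Hence $\phi\geq 0$ throughout, proving the proposition.

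The principal technical obstacle is regularity: (\ref{2.2}) holds only almost everywhere and $u^{\ast}$ is merely locally absolutely continuous, so the pointwise second-order argument above must be recast weakly, e.g.\ by testing $\phi''-A\tau\phi\leq 0$ against the negative part $\phi^{-}$ and integrating by parts, or by invoking a comparison principle for the associated Bessel-type ODE on absolutely continuous functions. This bookkeeping is standard in Talenti-type rearrangement arguments and is carried out in Trombetti--Vazquez \cite{TV}.
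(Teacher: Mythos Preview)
Your proposal is correct and follows essentially the same approach as the paper: the paper does not give its own proof of this proposition, deferring instead to \cite{TV}, but the argument you outline---setting $y(s)=\int_0^s(\widehat v-u^\ast)\,d\sigma$, deriving $y''-\tau A(s)y\le 0$ with $y(0)=y'(|B_R|)=0$, and concluding $y\ge 0$ by the maximum principle---is exactly the method the paper itself uses in proving the iterated version, Proposition~3.3. One small omission: Proposition~3.1 requires $u\ge 0$, so you should first reduce to that case by replacing $f$ with $|f|$ (the solution $\bar u$ to $-\Delta\bar u+\tau\bar u=|f|$ satisfies $\bar u\ge|u|$ by the maximum principle, hence $\bar u^\ast\ge u^\ast$, while $|f|^\ast=f^\ast$ leaves $v$ unchanged); the paper makes the same reduction in its proof of Proposition~3.3.
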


We now apply the comparison principle for the polyharmonic operator. Let $u\in
W^{2k,2}\left(  B_{R}\right)  $ be a weak solution of
\begin{equation}
\left\{
\begin{array}
[c]{c}%
\left(  -\Delta+\tau I\right)  ^{k}u=f\text{ in }B_{R}\\
u\in W_{N}^{2k,2}\left(  B_{R}\right)
\end{array}
\right.  \label{2.4}%
\end{equation}
where $f\in L^{\frac{2n}{n+2}}\left(  B_{R}\right)  $. If we consider the
problem
\begin{equation}
\left\{
\begin{array}
[c]{c}%
\left(  -\Delta+\tau I\right)  ^{k}v=f^{\#}\text{ in }B_{R}\\
v\in W_{N}^{2k,2}\left(  B_{R}\right)
\end{array}
\right.  \label{2.5}%
\end{equation}
then we have the following comparison of integrals in balls:

\begin{proposition}
Let $u,~v$ be weak solutions of the polyharmonic problems (\ref{2.4}) and
(\ref{2.5}) respectively. For every $r\in\left(  0,R\right)  $ we have%
\[
\int_{B_{r}}u^{\#}dx\leq\int_{B_{r}}vdx.
\]

\end{proposition}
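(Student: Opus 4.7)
The plan is to prove the statement by induction on $k$, reducing the polyharmonic problem to an iteration of second-order Dirichlet problems; the base case $k=1$ is precisely Proposition 3.2, and the inductive step is an \emph{iterated} comparison principle that strengthens Proposition 3.2 by allowing the right-hand sides themselves to satisfy only an integrated rearrangement inequality rather than the pointwise relation $f^\# = g$.

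To set up the decomposition, I would exploit the Navier boundary conditions. Setting $u_k = u$ and $u_{j-1} := (-\Delta + \tau I) u_j$ recursively for $j = k, k-1, \ldots, 1$, one has $u_0 = f$, and a direct computation shows that the Navier conditions $\Delta^i u|_{\partial B_R} = 0$ for $0 \le i \le k-1$ force $u_j|_{\partial B_R} = 0$ for every $j$. Hence each $u_j \in W_0^{1,2}(B_R)$ and solves a Dirichlet problem for $-\Delta + \tau I$ with right-hand side $u_{j-1}$. Defining analogously $v_0 = f^\#, v_1, \ldots, v_k = v$, and using that $(-\Delta + \tau I)^{-1}$ on a ball with Dirichlet data preserves radial symmetric nonincreasing functions, each $v_j$ coincides with its own Schwarz rearrangement. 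Reducing to $f \ge 0$ (and hence $f^\# \ge 0$) at the outset, the maximum principle propagates positivity to every $u_j$ and $v_j$, so Proposition 3.1 applies at each level.

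Next I would prove by induction on $j$ that
\[
\int_{B_r} u_j^\#\,dx \le \int_{B_r} v_j\,dx \qquad \text{for every } r \in (0,R).
\]
Introducing $U_j(s) = \int_0^s u_j^\ast(t)\,dt$ and $V_j(s) = \int_0^s v_j^\ast(t)\,dt$, the inductive hypothesis reads $U_{j-1} \le V_{j-1}$ on $[0, |B_R|]$. Proposition 3.1 applied to $u_j$ with right-hand side $u_{j-1}$, together with the corresponding \emph{equality} for the radial function $v_j$ with right-hand side $v_{j-1}$, yields
\[
-(u_j^\ast)'(s) \le g(s)\bigl[U_{j-1}(s) - \tau U_j(s)\bigr], \qquad -(v_j^\ast)'(s) = g(s)\bigl[V_{j-1}(s) - \tau V_j(s)\bigr],
\]
where $g(s) = s^{2/n - 2}/(n^2 \sigma_n^{2/n}) > 0$. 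Subtracting and using $U_{j-1} \le V_{j-1}$, the difference $\Delta := U_j - V_j$ satisfies the second-order differential inequality $\Delta''(s) \ge \tau g(s)\Delta(s)$ together with $\Delta(0) = 0$ and $\Delta'(|B_R|) = u_j^\ast(|B_R|) - v_j^\ast(|B_R|) = 0$. A standard maximum-principle argument — a positive interior maximum would give $\Delta'' \le 0$, contradicting $\Delta''\ge \tau g\Delta > 0$ there, while a positive maximum at $s = |B_R|$ is excluded by the Neumann-type boundary condition on $\Delta'$ combined with $\Delta'' > 0$ locally — forces $\Delta \le 0$ on $[0, |B_R|]$, which is precisely the desired rearrangement comparison at level $j$. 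Taking $j = k$ gives the proposition.

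The step I expect to be the main obstacle is the rigorous execution of the ODE maximum principle for $\Delta$: the weight $g(s) = s^{2/n-2}$ is singular at $s=0$, and one must first justify enough regularity of the rearrangements $u_j^\ast, v_j^\ast$ (and their derivatives) near both endpoints to read off $\Delta(0) = 0$ and $\Delta'(|B_R|) = 0$ and to exclude boundary maxima via a Hopf-type argument. The bookkeeping needed to show that Navier conditions really do propagate to genuine Dirichlet conditions on each $u_j$ — so that Propositions 3.1 and 3.2 apply at every stage of the iteration — is also a point that must be treated with care, though it reduces to a short computation.
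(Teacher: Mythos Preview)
Your proposal is correct and follows essentially the same route as the paper: decompose via the Navier conditions into an iterated second-order system, induct on the level using Proposition~3.1 (inequality for $u_j$, equality for the radial $v_j$), and close with an ODE maximum principle for the integrated difference subject to $y(0)=y'(|B_R|)=0$. The only cosmetic differences are that the paper reduces to nonnegative data at each inductive step (replacing $u_i$ by $|u_i|$) rather than once at the outset, and it simply invokes ``by maximum principle, $y\ge 0$'' without elaborating on the endpoint and singularity issues you rightly flag.
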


\begin{proof}
Since equations in (\ref{2.4}) and (\ref{2.5}) are considered with homogeneous
Navier boundary conditions, they may be rewritten as second order systems:%

\begin{align*}
(P1)\left\{
\begin{array}
[c]{c}%
-\Delta u_{1}+\tau u_{1}=f\text{ in }B_{R}\\
u_{1}\in W_{0}^{1,2}\left(  B_{R}\right)
\end{array}
\right.  ~\ (Pi)\left\{
\begin{array}
[c]{c}%
-\Delta u_{i}+\tau u_{i}=u_{i-1}\text{ in }B_{R}\\
u_{i}\in W_{0}^{1,2}\left(  B_{R}\right)
\end{array}
\right.  ~i  &  \in\left\{  2,3,...,k\right\} \\
(Q1)\left\{
\begin{array}
[c]{c}%
-\Delta v_{1}+\tau v_{1}=f^{\#}\text{ in }B_{R}\\
v_{1}\in W_{0}^{1,2}\left(  B_{R}\right)
\end{array}
\right.  ~\ (Qi)\left\{
\begin{array}
[c]{c}%
-\Delta v_{i}+\tau v_{i}=v_{i-1}\text{ in }B_{R}\\
v_{i}\in W_{0}^{1,2}\left(  B_{R}\right)
\end{array}
\right.  ~i  &  \in\left\{  2,3,...,k\right\}
\end{align*}
where $u_{k}=u$ and $v_{k}=v$. Thus we have to prove that for every
$r\in\left(  0,R\right)  $%
\begin{equation}
\int_{B_{r}}u_{k}^{\#}dx\leq\int_{B_{r}}v_{k}dx. \label{2.6}%
\end{equation}
By the above proposition (Proposition 3.2), we have
\[
\int_{B_{r}}u_{1}^{\#}dx\leq\int_{B_{r}}v_{1}dx.
\]
Now, if we have
\[
\int_{B_{r}}u_{j}^{\#}dx\leq\int_{B_{r}}v_{j}dx\text{ for all }j=1,...,i,
\]
we will prove that
\[
\int_{B_{r}}u_{i+1}^{\#}dx\leq\int_{B_{r}}v_{i+1}dx.
\]
Without loss of generality, we may assume that $u_{i+1}\geq0$. In fact, let
$\overline{u}_{i+1}$ be a weak solution of
\[
\left\{
\begin{array}
[c]{c}%
-\Delta\overline{u}_{i+1}+\tau\overline{u}_{i+1}=\left\vert u_{i}\right\vert
\text{ in }B_{R}\\
\overline{u}_{i+1}\in W_{0}^{1,2}\left(  B_{R}\right)
\end{array}
\right.
\]
then the maximum principle implies that $\overline{u}_{i+1}\geq0$ and
$\overline{u}_{i+1}\geq\left\vert u_{i+1}\right\vert $ in $B_{R}$.

Since $u_{i+1}$ is a nonnegative weak solution of $(P\left(  i+1\right)  )$
and $v_{i+1}$ is a nonnegative weak solution of $(Q\left(  i+1\right)  )$ then
by Proposition 3.1 we have%
\begin{align*}
-\frac{du_{i+1}^{\ast}}{ds}(s)  &  \leq\frac{s^{\frac{2}{n}-2}}{n^{2}%
\sigma_{n}^{2/n}}%
%TCIMACRO{\dint \limits_{0}^{s}}%
%BeginExpansion
{\displaystyle\int\limits_{0}^{s}}
%EndExpansion
\left(  u_{i}^{\ast}-\tau u_{i+1}^{\ast}\right)  d\tau,~\forall s\in\left(
0,\left\vert B_{R}\right\vert \right)  ,\\
-\frac{d\widehat{v}_{i+1}}{ds}(s)  &  =\frac{s^{\frac{2}{n}-2}}{n^{2}%
\sigma_{n}^{2/n}}%
%TCIMACRO{\dint \limits_{0}^{s}}%
%BeginExpansion
{\displaystyle\int\limits_{0}^{s}}
%EndExpansion
\left(  \widehat{v}_{i}-\tau\widehat{v}_{i+1}\right)  d\tau,~\forall
s\in\left(  0,\left\vert B_{R}\right\vert \right)
\end{align*}
Thus for all $s\in\left(  0,\left\vert B_{R}\right\vert \right)  $%
\[
\frac{d\widehat{v}_{i+1}}{ds}(s)-\frac{du_{i+1}^{\ast}}{ds}(s)-\frac
{s^{\frac{2}{n}-2}}{n^{2}\sigma_{n}^{2/n}}%
%TCIMACRO{\dint \limits_{0}^{s}}%
%BeginExpansion
{\displaystyle\int\limits_{0}^{s}}
%EndExpansion
\left(  \tau\widehat{v}_{i+1}-\tau u_{i+1}^{\ast}\right)  d\tau\leq
\frac{s^{\frac{2}{n}-2}}{n^{2}\sigma_{n}^{2/n}}%
%TCIMACRO{\dint \limits_{0}^{s}}%
%BeginExpansion
{\displaystyle\int\limits_{0}^{s}}
%EndExpansion
\left(  u_{i}^{\ast}-\widehat{v}_{i}\right)  d\tau.
\]
Using the induction hypotheses, we get that
\[%
%TCIMACRO{\dint \limits_{0}^{s}}%
%BeginExpansion
{\displaystyle\int\limits_{0}^{s}}
%EndExpansion
\left(  u_{i}^{\ast}-\widehat{v}_{i}\right)  d\tau\leq0~\forall s\in\left(
0,\left\vert B_{R}\right\vert \right)
\]
and then
\[
\frac{d\widehat{v}_{i+1}}{ds}(s)-\frac{du_{i+1}^{\ast}}{ds}(s)-\frac
{s^{\frac{2}{n}-2}}{n^{2}\sigma_{n}^{2/n}}%
%TCIMACRO{\dint \limits_{0}^{s}}%
%BeginExpansion
{\displaystyle\int\limits_{0}^{s}}
%EndExpansion
\left(  \tau\widehat{v}_{i+1}-\tau u_{i+1}^{\ast}\right)  d\tau\leq0.
\]
Setting
\[
y(s)=%
%TCIMACRO{\dint \limits_{0}^{s}}%
%BeginExpansion
{\displaystyle\int\limits_{0}^{s}}
%EndExpansion
\left(  \widehat{v}_{i+1}-u_{i+1}^{\ast}\right)  d\tau~~\,\,\forall
s\in\left(  0,\left\vert B_{R}\right\vert \right)
\]
we get
\[
\left\{
\begin{array}
[c]{c}%
y^{\prime\prime}-\frac{\tau s^{\frac{2}{n}-2}}{n^{2}\sigma_{n}^{2/n}}%
y\leq0,~\forall s\in\left(  0,\left\vert B_{R}\right\vert \right) \\
y(0)=y^{\prime}(\left\vert B_{R}\right\vert )=0
\end{array}
\right.  .
\]
By maximum principle, we have that $y\geq0$ which is the desired result.
\end{proof}

From the above proposition, we have the following corollary:

\begin{corollary}
Let $u,~v$ be weak solutions of the polyharmonic problems (\ref{2.4}) and
(\ref{2.5}) respectively. For every convex nondecreasing function
$\phi:\left[  0,+\infty\right)  \rightarrow\left[  0,+\infty\right)  $ we have%
\[
\int_{B_{r}}\phi\left(  \left\vert u\right\vert \right)  dx\leq\int_{B_{r}%
}\phi\left(  \left\vert v\right\vert \right)  dx.
\]

\end{corollary}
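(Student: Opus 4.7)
The plan is to combine the pointwise comparison of integrals from Proposition 3.3 with the classical Hardy-Littlewood and Hardy-Littlewood-P\'olya rearrangement inequalities.

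First I would verify that the solution $v$ of (\ref{2.5}) coincides with its symmetric decreasing rearrangement, $v=v^{\#}$. Inspecting the second-order decomposition $(Q1)$-$(Qk)$ used in the proof of Proposition 3.3, the right-hand sides are, successively, $f^{\#},v_{1},\dots,v_{k-1}$. By uniqueness of the solution, the radial symmetry of $B_{R}$, and the elementary fact that $-\Delta w+\tau w=h$ on $B_{R}$ with zero Dirichlet datum and radial nonnegative nonincreasing $h$ produces a radial nonnegative nonincreasing $w$, each $v_{i}$ coincides with its own symmetric decreasing rearrangement; iterating gives $v=v_{k}=v^{\#}$. Combined with Proposition 3.3 and the substitution $s=\sigma_{n}r^{n}$, this yields the one-dimensional majorization
\[
\int_{0}^{s}u^{\ast}(t)\,dt\ \le\ \int_{0}^{s}v^{\ast}(t)\,dt\qquad\text{for every }s\in(0,|B_{R}|).
\]

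Next I would apply the Hardy-Littlewood rearrangement inequality to $\phi(|u|)\mathbf{1}_{B_{r}}$, using $\mathbf{1}_{B_{r}}^{\#}=\mathbf{1}_{B_{r}}$ together with $[\phi(|u|)]^{\#}=\phi(u^{\#})$ (valid since $\phi$ is nondecreasing), to obtain
\[
\int_{B_{r}}\phi(|u|)\,dx\ \le\ \int_{B_{r}}\phi(u^{\#})\,dx=\int_{0}^{|B_{r}|}\phi(u^{\ast}(t))\,dt,
\]
while $\int_{B_{r}}\phi(|v|)\,dx=\int_{0}^{|B_{r}|}\phi(v^{\ast}(t))\,dt$ because $v=v^{\#}\ge 0$. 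The corollary thus reduces to the one-dimensional Hardy-Littlewood-P\'olya majorization $\int_{0}^{|B_{r}|}\phi(u^{\ast})\,dt\le\int_{0}^{|B_{r}|}\phi(v^{\ast})\,dt$.

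For truncations $\phi_{c}(t)=(t-c)_{+}$ this is immediate because $u^{\ast}$ is nonincreasing, so $s\mapsto\int_{0}^{s}u^{\ast}\,dt-cs$ is concave and $\int_{0}^{|B_{r}|}(u^{\ast}-c)_{+}\,dt=\max_{s\in[0,|B_{r}|]}[\int_{0}^{s}u^{\ast}\,dt-cs]$, with the same identity for $v^{\ast}$, so the majorization inequality gives the claim termwise in $s$. For general convex nondecreasing $\phi$, I would invoke the integral representation $\phi(t)=\phi(0)+\phi'(0^{+})t+\int_{0}^{\infty}(t-c)_{+}\,d\nu(c)$, where $\nu=d\phi'$ is a positive Borel measure on $[0,\infty)$, and integrate the truncation inequality against $d\nu(c)$, adding the linear and constant contributions. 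The main technical point is the verification $v=v^{\#}$ via the iterated Navier decomposition (relying on a Talenti-type symmetry fact at each stage); once that and the truncation representation are in place, the remainder is standard rearrangement bookkeeping.
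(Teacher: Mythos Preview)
Your argument is correct. The paper does not actually supply a proof of this corollary, treating it as an immediate consequence of Proposition~3.3 together with standard rearrangement theory; your write-up fills in precisely those standard details (the verification that $v=v^{\#}$ via the iterated Navier system, the Hardy--Littlewood inequality to pass from $\phi(|u|)$ to $\phi(u^{\#})$, and the Hardy--Littlewood--P\'olya majorization via the truncation representation of convex nondecreasing $\phi$), so your approach is exactly what the authors have in mind.
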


\begin{remark}
If $f\in C_{0}^{\infty}\left(
%TCIMACRO{\U{211d} }%
%BeginExpansion
\mathbb{R}
%EndExpansion
^{n}\right)  $, $suppf\subset B_{R}$, then we can conclude that $u$ and $v$ in
Proposition 3.3 belong to $W_{N}^{m,\frac{n}{m}}\left(  B_{R}\right)  $ with
$m=2k$ or $2k+1.$
\end{remark}

\section{Proofs of Theorems 1.2, 1.3 and 1.4 when $m$ is even}

In this section, we will prove Theorem 1.2 in the case when $m$ is even,
namely, $m=2k,~k\in%
%TCIMACRO{\U{2115} }%
%BeginExpansion
\mathbb{N}
%EndExpansion
$.

\begin{theorem}
Let $m=2k,~k\in%
%TCIMACRO{\U{2115} }%
%BeginExpansion
\mathbb{N}
%EndExpansion
$. For all $\tau>0$, there holds%
\[
\underset{u\in W^{m,2}\left(
%TCIMACRO{\U{211d} }%
%BeginExpansion
\mathbb{R}
%EndExpansion
^{2m}\right)  ,\left\Vert \left(  -\Delta+\tau I\right)  ^{k}u\right\Vert
_{2}\leq1}{\sup}\int_{%
%TCIMACRO{\U{211d} }%
%BeginExpansion
\mathbb{R}
%EndExpansion
^{2m}}\left(  e^{\beta(2m,m)u^{2}}-1\right)  dx<\infty.
\]
Furthermore this inequality is sharp, i.e., if $\beta(2m,m)$ is
replaced by any $\beta>\beta(2m,m)$, then the supremum is infinite.
\end{theorem}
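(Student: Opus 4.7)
The plan is to combine the iterated comparison principle of Proposition 3.3 with a bulk/tail decomposition of the radial rearrangement on a ball. By density and the equivalence of norms given by identity (2.003) (with $a_{m-j}=\binom{m}{j}\tau^{m-j}$), I may restrict to $u\in C_c^\infty(\mathbb{R}^{2m})$ with $\operatorname{supp}(u)\subset B_R$ for some $R>0$; in particular $u\in W_N^{m,2}(B_R)$ and $f:=(-\Delta+\tau I)^k u\in C_c^\infty$ satisfies $\|f\|_2\le 1$. Let $v\in W_N^{m,2}(B_R)$ be the weak solution of $(-\Delta+\tau I)^k v=f^{\#}$ in $B_R$. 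Then $v$ is radial and nonincreasing (inheriting these properties from $f^{\#}$ through the second-order decomposition $(P1)$--$(Pi)$ in Proposition 3.3), $\|(-\Delta+\tau I)^k v\|_2=\|f^{\#}\|_2=\|f\|_2\le 1$, and Corollary 3.4 applied to the convex nondecreasing function $t\mapsto e^{\beta(2m,m)t^2}-1$ yields
\[
\int_{B_R}(e^{\beta(2m,m)u^2}-1)\,dx\le\int_{B_R}(e^{\beta(2m,m)v^2}-1)\,dx.
\]
It therefore suffices to bound the right-hand side uniformly in $R$.

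From (2.003) and $\|(-\Delta+\tau I)^k v\|_2\le 1$ one reads off $\|v\|_2\le\tau^{-m/2}$ and $\|\nabla^m v\|_2\le 1$. Since $v$ is radial nonincreasing, Lemma 2.5 produces a radius $\rho_0=\rho_0(\tau,m)$, independent of both $R$ and $u$, with $v\le 1$ on $B_R\setminus B_{\rho_0}$. On this outer region the elementary inequality $e^t-1\le e\,t$ for $t\in[0,1]$ gives
\[
\int_{B_R\setminus B_{\rho_0}}(e^{\beta(2m,m)v^2}-1)\,dx\le e\,\beta(2m,m)\|v\|_2^2\le C_1(\tau,m).
\]

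The main step is to control the bulk integral on the fixed ball $B_{\rho_0}$. The difficulty is that $v$ inherits Navier data only on $\partial B_R$, not on $\partial B_{\rho_0}$, so Tarsi's Theorem B on $B_{\rho_0}$ cannot be invoked directly with the sharp constant. I would handle this step via the potential representation $v=G_{k,\tau}*f^{\#}$, where $G_{k,\tau}$ is the Green kernel of $(-\Delta+\tau I)^k$ on $\mathbb{R}^{2m}$: in the critical dimension $n=2m$ this kernel has a logarithmic singularity at the origin whose coefficient produces the sharp constant $\beta(2m,m)$, and an O'Neil-type rearrangement inequality applied to the radial convolution integral with $\|f^{\#}\|_2\le 1$ yields a pointwise Adams--Moser envelope of $v(|x|)$, from which $\int_{B_{\rho_0}}(e^{\beta(2m,m)v^2}-1)\,dx\le C_2(\tau,m)$ follows by the standard exponential estimate. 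Combining the two regions and sending $R\to\infty$ completes the upper bound.

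Sharpness is obtained by testing with a classical Moser-type concentration sequence $M_\varepsilon$ supported in $B_1$ and normalized so that $\|\Delta^k M_\varepsilon\|_2=1$; then $\|(-\Delta+\tau I)^k M_\varepsilon\|_2\to 1$ as $\varepsilon\to 0$ (the lower-order terms $\|\nabla^j M_\varepsilon\|_2$ for $j<m$ vanish in the concentration limit), while for any $\beta>\beta(2m,m)$ the integral $\int_{B_1}(e^{\beta M_\varepsilon^2}-1)\,dx\to\infty$ by the same computation as in Adams' original proof on a bounded domain.
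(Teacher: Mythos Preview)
Your overall architecture---density, passing via Proposition~3.3/Corollary~3.1 to the radial Navier solution $v$ on $B_R$, a bulk/tail split at a fixed radius $R_0$, the tail bound from a radial lemma plus $\|v\|_2^2\le\tau^{-m}$, and the sharpness argument---matches the paper's proof. (A minor point: the paper uses Lemma~2.4 for the tail, which only needs $v\in W^{1,2}$; you invoke Lemma~2.5, which requires that $v$ be radially nonincreasing, a fact you assert but which would need an iterated maximum-principle argument through the second-order decomposition.)

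The genuine gap is in the bulk step. You write ``the potential representation $v=G_{k,\tau}*f^{\#}$'', but this identity is \emph{false}: you defined $v$ as the Navier solution of $(-\Delta+\tau I)^k v=f^{\#}$ on the finite ball $B_R$, and this differs from the whole-space convolution by a nontrivial element of $\ker(-\Delta+\tau I)^k$ that corrects the boundary data on $\partial B_R$. It is true that $0\le v\le G_{k,\tau}*f^{\#}$ on $B_R$ (since $f^{\#}\ge0$ and the Bessel kernel is positive, an iterated comparison gives this inequality), and with that inequality plus a careful O'Neil/Adams estimate for the Bessel kernel---whose leading singularity in dimension $2m$ does coincide with that of the Riesz kernel $I_m$---one could in principle finish; but none of this is carried out in your proposal, and as written the key step rests on an incorrect equality.

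The paper handles the bulk on $B_{R_0}$ by a quite different, self-contained construction that stays inside the Navier framework. From $v_l$ one subtracts an explicit polyharmonic polynomial $\sum_{i=1}^{k-1}a_i|x|^{m-2i}+a_k$ (each $g_i(|x|)=|x|^{m-2i}$ satisfies $\Delta^k g_i=0$), with the coefficients $a_i$ chosen recursively so that the remainder $z_l$ satisfies $\Delta^j z_l(R_0)=0$ for $0\le j\le k-1$; thus $z_l\in W_{N,\mathrm{rad}}^{m,2}(B_{R_0})$ while $\nabla^m z_l=\nabla^m v_l$. Lemma~4.1 then yields $|v_l|^2\le|z_l|^2(1+E)^2+d(m,R_0)$, where $E$ is a finite sum of lower-order norms $\|\Delta^{k-j}v_l\|_{W^{1,2}}^2$ and $\|v_l\|_{W^{1,2}}^2$ divided by positive powers of $R_0$. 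Setting $w_l=(1+E)z_l$, one uses identity~(2.003) to convert the constraint $\|(-\Delta+\tau I)^k v_l\|_2\le1$ into $\|\nabla^m v_l\|_2^2\le1-\lambda\cdot(\text{those same lower-order norms})$ with $\lambda=\min_j\binom{m}{j}\tau^{m-j}$; for $R_0=R_0(\tau)$ large this forces $\|\nabla^m w_l\|_2\le1$, and Theorem~B on $B_{R_0}$ bounds $I_1$. This polynomial-correction device is the substantive idea you are missing.
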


\begin{proof}
Let $u\in W^{m,2}\left(
%TCIMACRO{\U{211d} }%
%BeginExpansion
\mathbb{R}
%EndExpansion
^{2m}\right)  ,\left\Vert \left(  -\Delta+\tau I\right)  ^{k}u\right\Vert
_{2}\leq1$, by the fact that $C_{0}^{\infty}\left(
%TCIMACRO{\U{211d} }%
%BeginExpansion
\mathbb{R}
%EndExpansion
^{2m}\right)  $ is dense in $W^{m,2}\left(
%TCIMACRO{\U{211d} }%
%BeginExpansion
\mathbb{R}
%EndExpansion
^{2m}\right)  $, without loss of generality, we can find a sequence of
functions $u_{l}\in C_{0}^{\infty}\left(
%TCIMACRO{\U{211d} }%
%BeginExpansion
\mathbb{R}
%EndExpansion
^{2m}\right)  $ such that $u_{l}\rightarrow u$ in $W^{m,2}\left(
%TCIMACRO{\U{211d} }%
%BeginExpansion
\mathbb{R}
%EndExpansion
^{2m}\right)  $ and $\int_{%
%TCIMACRO{\U{211d} }%
%BeginExpansion
\mathbb{R}
%EndExpansion
^{2m}}\left\vert \left(  -\Delta+\tau I\right)  ^{k}u_{l}\right\vert
^{2}dx\leq1$ and suppose that supp\thinspace\ $u_{l}\subset B_{R_{l}}$ for any
fixed $l$. Let $f_{l}:=\left(  -\Delta+\tau I\right)  ^{k}u_{l}$. Consider the
problem%
\[
\left\{
\begin{array}
[c]{c}%
\left(  -\Delta+\tau I\right)  ^{k}v_{l}=f_{l}^{\#}\\
v_{l}\in W_{N}^{m,2}\left(  B_{R_{l}}\right)
\end{array}
\right.  .
\]
By the property of rearrangement, we have
\begin{equation}
\int_{B_{R_{l}}}\left\vert \left(  -\Delta+\tau I\right)  ^{k}v_{l}\right\vert
^{2}dx=\int_{B_{R_{l}}}\left\vert \left(  -\Delta+\tau I\right)  ^{k}%
u_{l}\right\vert ^{2}dx\leq1\label{3.1}%
\end{equation}
and by Corollary 3.1, we get
\[
\int_{B_{R_{l}}}\left(  e^{\beta_{0}u_{l}^{2}}-1\right)  dx=\int_{B_{R_{l}}%
}\left(  e^{\beta_{0}u_{l}^{\#2}}-1\right)  dx\leq\int_{B_{R_{l}}}\left(
e^{\beta_{0}v_{l}^{2}}-1\right)  dx.
\]

Also, from (\ref{3.1}) and (\ref{2.003}), we have%
\begin{align*}
\left\Vert v_{l}\right\Vert _{W^{1,2}} &  =\left[  \int_{B_{R_{l}}}\left(
\left\vert v_{l}\right\vert ^{2}+\left\vert \nabla v_{l}\right\vert
^{2}\right)  \right]  ^{1/2}\\
&  \leq\sqrt{\frac{1}{\tau^{m}}+\frac{1}{m\tau^{m-1}}}.
\end{align*}
Now, writing
\begin{align*}
\int_{B_{R_{l}}}\left(  e^{\beta_{0}v_{l}^{2}}-1\right)  dx &  \leq
\int_{B_{R_{0}}}\left(  e^{\beta_{0}v_{l}^{2}}-1\right)  dx+\int_{B_{R_{l}%
}\smallsetminus B_{R_{0}}}\left(  e^{\beta_{0}v_{l}^{2}}-1\right)  dx\\
&  =I_{1}+I_{2}%
\end{align*}
where $R_{0}$ depends only on $\tau$ and will be chosen later, we will prove
that both $I_{1}$ and $I_{2}$ are bounded uniformly by a constant that depends
only on $\tau$.

Using Theorem B, we can estimate $I_{1}$. Indeed, we just need to construct an
auxiliary radial function $w_{l}\in W_{N}^{m,2}\left(  B_{R_{0}}\right)  $
with $\left\Vert \nabla^{m}w_{l}\right\Vert _{2}\leq1$ which increases the
integral we are interested in. Such a function was constructed in \cite{RS}.
For the completeness, we give the detail here. For each $i\in\left\{
1,2,...k-1\right\}  $ we define
\[
g_{i}\left(  \left\vert x\right\vert \right)  :=\left\vert x\right\vert
^{m-2i},~\forall x\in B_{R_{0}}%
\]
so $g_{i}\in W_{rad}^{m,2}\left(  B_{R_{0}}\right)  $. Moreover,%
\[
\Delta^{j}g_{i}\left(  \left\vert x\right\vert \right)  =\left\{
\begin{array}
[c]{c}%
c_{i}^{j}\left\vert x\right\vert ^{m-2(i+j)}\text{ for }j\in\left\{
1,2,...k-i\right\}  \\
0\text{ for }j\in\left\{  k-i+1,...,k\right\}
\end{array}
\right.  ~\forall x\in B_{R_{0}}%
\]
where
\[
c_{i}^{j}=%
%TCIMACRO{\dprod \limits_{h=1}^{j}}%
%BeginExpansion
{\displaystyle\prod\limits_{h=1}^{j}}
%EndExpansion
\left[  n+m-2\left(  h+i\right)  \right]  \left[  m-2\left(  i+h-1\right)
\right]  \text{, }\forall j\in\left\{  1,2,...k-i\right\}  .
\]
Let
\[
z_{l}\left(  \left\vert x\right\vert \right)  :=v_{l}\left(  \left\vert
x\right\vert \right)  -%
%TCIMACRO{\dsum \limits_{i=1}^{k-1}}%
%BeginExpansion
{\displaystyle\sum\limits_{i=1}^{k-1}}
%EndExpansion
a_{i}g_{i}\left(  \left\vert x\right\vert \right)  -a_{k}~\forall x\in
B_{R_{0}}%
\]
where
\begin{align*}
a_{i} &  :=\frac{\Delta^{k-i}v_{l}\left(  R_{0}\right)  -%
%TCIMACRO{\dsum \limits_{j=1}^{i-1}}%
%BeginExpansion
{\displaystyle\sum\limits_{j=1}^{i-1}}
%EndExpansion
a_{j}\Delta^{k-i}g_{j}\left(  R_{0}\right)  }{\Delta^{k-i}g_{i}\left(
R_{0}\right)  },~\forall i\in\left\{  1,2,...k-1\right\}  ,\\
a_{k} &  :=v_{l}\left(  R_{0}\right)  -%
%TCIMACRO{\dsum \limits_{i=1}^{k-1}}%
%BeginExpansion
{\displaystyle\sum\limits_{i=1}^{k-1}}
%EndExpansion
a_{i}g_{i}\left(  R_{0}\right)  .
\end{align*}
We can check that (see \cite{RS})
\begin{align*}
z_{l} &  \in W_{N,rad}^{m,2}\left(  B_{R_{0}}\right)  ,\\
\nabla^{m}v_{l} &  =\nabla^{m}z_{l}\text{ in }B_{R_{0}}\text{.}%
\end{align*}
We have the following lemma whose proof can be found in \cite{RS}:

\begin{lemma}
For $0<\left\vert x\right\vert \leq R_{0}$ we have for some $d(m, R_{0})$ only
depending on $m$ and $R_{0}$ such that
\[
\left\vert v_{l}\left(  \left\vert x\right\vert \right)  \right\vert ^{2}%
\leq\left\vert z_{l}\left(  \left\vert x\right\vert \right)  \right\vert
^{2}\left(  1+c_{m}%
%TCIMACRO{\dsum \limits_{j=1}^{k-1}}%
%BeginExpansion
{\displaystyle\sum\limits_{j=1}^{k-1}}
%EndExpansion
\frac{1}{R_{0}^{4j-1}}\left\Vert \Delta^{k-j}v_{l}\right\Vert _{W^{1,2}}%
^{2}+\frac{c_{m}}{R_{0}^{2m-1}}\left\Vert v_{l}\right\Vert _{W^{1,2}}%
^{2}\right)  ^{2}+d(m,R_{0}).
\]

\end{lemma}

Now, setting
\[
w_{l}\left(  \left\vert x\right\vert \right)  :=z_{l}\left(  \left\vert
x\right\vert \right)  \left(  1+c_{m}%
%TCIMACRO{\dsum \limits_{j=1}^{k-1}}%
%BeginExpansion
{\displaystyle\sum\limits_{j=1}^{k-1}}
%EndExpansion
\frac{1}{R_{0}^{4j-1}}\left\Vert \Delta^{k-j}v_{l}\right\Vert _{W^{1,2}}%
^{2}+\frac{c_{m}}{R_{0}^{2m-1}}\left\Vert v_{l}\right\Vert _{W^{1,2}}%
^{2}\right)  .
\]
Since
\begin{align*}
z_{l}  &  \in W_{N,rad}^{m,2}\left(  B_{R_{0}}\right)  ,\\
\nabla^{m}v_{l}  &  =\nabla^{m}z_{l}\text{ in }B_{R_{0}}\text{.}%
\end{align*}
we have
\[
w_{l}\in W_{N,rad}^{m,2}\left(  B_{R_{0}}\right)
\]
and%
\[
\left\Vert \nabla^{m}w_{l}\right\Vert _{2}=\left\Vert \nabla^{m}%
z_{l}\right\Vert _{2}\left(  1+c_{m}%
%TCIMACRO{\dsum \limits_{j=1}^{k-1}}%
%BeginExpansion
{\displaystyle\sum\limits_{j=1}^{k-1}}
%EndExpansion
\frac{1}{R_{0}^{4j-1}}\left\Vert \Delta^{k-j}v_{l}\right\Vert _{W^{1,2}}%
^{2}+\frac{c_{m}}{R_{0}^{2m-1}}\left\Vert v_{l}\right\Vert _{W^{1,2}}%
^{2}\right)  .
\]
Note that
\begin{align*}
\left\Vert \nabla^{m}z_{l}\right\Vert _{2}  &  =\left\Vert \nabla^{m}%
v_{l}\right\Vert _{2}\\
&  \leq\left(  1-\lambda%
%TCIMACRO{\dsum \limits_{j=1}^{k-1}}%
%BeginExpansion
{\displaystyle\sum\limits_{j=1}^{k-1}}
%EndExpansion
\left\Vert \Delta^{k-j}v_{l}\right\Vert _{W^{1,2}}^{2}-\lambda\left\Vert
v_{l}\right\Vert _{W^{1,2}}^{2}\right)  ^{1/2}\\
&  \leq1-\frac{\lambda}{2}%
%TCIMACRO{\dsum \limits_{j=1}^{k-1}}%
%BeginExpansion
{\displaystyle\sum\limits_{j=1}^{k-1}}
%EndExpansion
\left\Vert \Delta^{k-j}v_{l}\right\Vert _{W^{1,2}}^{2}-\frac{\lambda}%
{2}\left\Vert v_{l}\right\Vert _{W^{1,2}}^{2}%
\end{align*}
where
\[
\lambda=\min\left\{  \binom{m}{j}\tau^{m-j}:j=0,1,...,m-1.\right\}
\]
we have%
\begin{align*}
\left\Vert \nabla^{m}w_{l}\right\Vert _{2}  &  \leq\left(  1-\frac{\lambda}{2}%
%TCIMACRO{\dsum \limits_{j=1}^{k-1}}%
%BeginExpansion
{\displaystyle\sum\limits_{j=1}^{k-1}}
%EndExpansion
\left\Vert \Delta^{k-j}v_{l}\right\Vert _{W^{1,2}}^{2}-\frac{\lambda}%
{2}\left\Vert v_{l}\right\Vert _{W^{1,2}}^{2}\right)  \times\\
&  \times\left(  1+c_{m}%
%TCIMACRO{\dsum \limits_{j=1}^{k-1}}%
%BeginExpansion
{\displaystyle\sum\limits_{j=1}^{k-1}}
%EndExpansion
\frac{1}{R_{0}^{4j-1}}\left\Vert \Delta^{k-j}v_{l}\right\Vert _{W^{1,2}}%
^{2}+\frac{c_{m}}{R_{0}^{2m-1}}\left\Vert v_{l}\right\Vert _{W^{1,2}}%
^{2}\right) \\
&  \leq1+%
%TCIMACRO{\dsum \limits_{j=1}^{k-1}}%
%BeginExpansion
{\displaystyle\sum\limits_{j=1}^{k-1}}
%EndExpansion
\left(  \frac{c_{m}}{R_{0}^{4j-1}}-\frac{\lambda}{2}\right)  \left\Vert
\Delta^{k-j}v_{l}\right\Vert _{W^{1,2}}^{2}+\left(  \frac{c_{m}}{R_{0}^{2m-1}%
}-\frac{\lambda}{2}\right)  \left\Vert v_{l}\right\Vert _{W^{1,2}}^{2}\\
&  \leq1
\end{align*}
if we choose $R_{0}=R_{0}(\tau)$ sufficiently large.

Finally, note that
\[
I_{1}\leq e^{\beta_{0}d(m,R_{0})}\int_{B_{R_{0}}}e^{\beta_{0}w_{l}^{2}}dx,
\]
using Theorem B, we can conclude that $I_{1}$ is bounded by a constant
depending only on $\tau$ since $\left\Vert \nabla^{m}w_{l}\right\Vert _{2}%
\leq1$ and $w_{l}\in W_{N,rad}^{m,2}\left(  B_{R_{0}}\right)  .$

Now, we will estimate $I_{2}$. We choose $R_{0}\geq\left[  \frac{1}%
{m\sigma_{n}}\left(  \frac{1}{\tau^{m}}+\frac{1}{m\tau^{m-1}}\right)  \right]
$ $^{\frac{1}{n-1}}$ then from the Radial Lemma 2.4 we get that $\left\vert
v_{l}(x)\right\vert \leq1$ when $\left\vert x\right\vert \geq R_{0}$. Thus we
have%
\begin{align*}
I_{2}  &  =\int_{B_{R_{l}}\smallsetminus B_{R_{0}}}\left(  e^{\beta_{0}%
v_{l}^{2}}-1\right)  dx\\
&  \leq%
%TCIMACRO{\dsum \limits_{j=1}^{\infty}}%
%BeginExpansion
{\displaystyle\sum\limits_{j=1}^{\infty}}
%EndExpansion
\frac{\beta_{0}^{j}}{j!}\int_{B_{R_{l}}}v_{l}^{2}\\
&  \leq\frac{1}{\tau^{m}}%
%TCIMACRO{\dsum \limits_{j=1}^{\infty}}%
%BeginExpansion
{\displaystyle\sum\limits_{j=1}^{\infty}}
%EndExpansion
\frac{\beta_{0}^{j}}{j!}.
\end{align*}
Thus we have that $\int_{B_{R_{l}}}\left(  e^{\beta_{0}v_{l}^{2}}-1\right)
dx$ is bounded by a constant depending only on $\tau$.

Combining the above estimates and using Fatou's lemma, we can conclude that
\[
\underset{\left\Vert \left(  -\Delta+\tau I\right)  ^{k}u\right\Vert _{2}%
\leq1}{\sup}\int_{%
%TCIMACRO{\U{211d} }%
%BeginExpansion
\mathbb{R}
%EndExpansion
^{2m}}\left(  e^{\beta_{0}u^{2}}-1\right)  dx<\infty\text{.}%
\]

When $\beta>\beta_{0}$, it's easy to check that the sequence given by Ruf and
Sani (see Proposition 6.2. in \cite{RS}) will make our supremum blow up and we
then complete the proof of Theorem 4.1.
\end{proof}

\bigskip\textbf{Proof of Theorem 1.3 when $m$ is even: }Choose $\tau>0$ as in
Lemma 2.1, we have
\[
\int_{%
%TCIMACRO{\U{211d} }%
%BeginExpansion
\mathbb{R}
%EndExpansion
^{2m}}\left(
%TCIMACRO{\dsum \limits_{j=0}^{m}}%
%BeginExpansion
{\displaystyle\sum\limits_{j=0}^{m}}
%EndExpansion
a_{m-j}\left\vert \nabla^{j}u\right\vert ^{2}\right)  dx\geq\left\Vert \left(
-\Delta+\tau I\right)  ^{k}u\right\Vert _{2}^{2}%
\]
and then%
\[
\underset{\int_{%
%TCIMACRO{\U{211d} }%
%BeginExpansion
\mathbb{R}
%EndExpansion
^{2m}}\left(
%TCIMACRO{\dsum \limits_{j=0}^{m}}%
%BeginExpansion
{\displaystyle\sum\limits_{j=0}^{m}}
%EndExpansion
a_{m-j}\left\vert \nabla^{j}u\right\vert ^{2}\right)  dx\leq1}{\sup}\int_{%
%TCIMACRO{\U{211d} }%
%BeginExpansion
\mathbb{R}
%EndExpansion
^{2m}}\left(  e^{\beta_{0}u^{2}}-1\right)  dx\leq\underset{\left\Vert \left(
-\Delta+\tau I\right)  ^{k}u\right\Vert _{2}\leq1}{\sup}\int_{%
%TCIMACRO{\U{211d} }%
%BeginExpansion
\mathbb{R}
%EndExpansion
^{2m}}\left(  e^{\beta_{0}u^{2}}-1\right)  dx.
\]

Furthermore, we can check that the sequence given by Ruf and Sani (see
Proposition 6.2. in \cite{RS}) will make the supremum in Theorem 1.3 becomes
infinite and we complete the proof of Theorem 1.3.

\textbf{Proof of Theorem 1.4 when $m$ is even:} If we choose $a_{i}%
=1,\ i=0,1,...,m$, then by Lemma 2.1 we have proved Theorem 1.4 in the case
$m=2k$.

\section{\bigskip Proofs of Theorems 1.2, 1.3 and 1.4 when $m$ is odd}

\subsection{Proofs of Theorems 1.2, 1.3 and 1.4 when $n=2m=6.$}

For the convenience, first, we will prove Theorem 1.2 in the special case
$k=1$, i.e., we will prove that all $\tau>0$, there holds
\[
\underset{u\in W^{3,2}\left(
%TCIMACRO{\U{211d} }%
%BeginExpansion
\mathbb{R}
%EndExpansion
^{6}\right)  ,~\left\Vert \nabla\left(  -\Delta+\tau I\right)  u\right\Vert
_{2}^{2}+\tau\left\Vert \left(  -\Delta+\tau I\right)  u\right\Vert _{2}%
^{2}\leq1}{\sup}\int_{%
%TCIMACRO{\U{211d} }%
%BeginExpansion
\mathbb{R}
%EndExpansion
^{6}}\left(  e^{\beta_{0}u^{2}}-1\right)  dx<\infty\text{.}%
\]
where $\beta_{0}=\beta\left(  6,3\right)  .$

\begin{proof}
Let $u\in W^{3,2}\left(
%TCIMACRO{\U{211d} }%
%BeginExpansion
\mathbb{R}
%EndExpansion
^{6}\right)  $ be such that
\[
\left\Vert \nabla\left(  -\Delta+\tau I\right)  u\right\Vert _{2}^{2}%
+\tau\left\Vert \left(  -\Delta+\tau I\right)  u\right\Vert _{2}^{2}\leq1.
\]
Again, by the density of $C_{0}^{\infty}\left(
%TCIMACRO{\U{211d} }%
%BeginExpansion
\mathbb{R}
%EndExpansion
^{6}\right)  $ in $W^{3,2}\left(
%TCIMACRO{\U{211d} }%
%BeginExpansion
\mathbb{R}
%EndExpansion
^{6}\right)  $, there exists a sequence of functions $u_{l}\in C_{0}^{\infty
}\left(
%TCIMACRO{\U{211d} }%
%BeginExpansion
\mathbb{R}
%EndExpansion
^{6}\right)  $: $u_{l}\rightarrow u$ in $W^{3,2}\left(
%TCIMACRO{\U{211d} }%
%BeginExpansion
\mathbb{R}
%EndExpansion
^{6}\right)  $,
\[
\int_{%
%TCIMACRO{\U{211d} }%
%BeginExpansion
\mathbb{R}
%EndExpansion
^{6}}\left\vert \nabla\left(  -\Delta+\tau I\right)  u_{l}\right\vert
^{2}+\tau\left\vert \left(  -\Delta+\tau I\right)  u_{l}\right\vert ^{2}%
dx\leq1
\]
and supp\thinspace$u_{l}\subset B_{R_{l}}$ for any fixed $l$. Set
$f_{l}:=\left(  -\Delta+\tau I\right)  u_{l}$ and consider the problem%
\[
\left\{
\begin{array}
[c]{c}%
\left(  -\Delta+\tau I\right)  v_{l}=f_{l}^{\#}\\
v_{l}\in W_{N}^{3,2}\left(  B_{R_{l}}\right)
\end{array}
\right.  .
\]
By the properties of rearrangement, we have
\begin{align*}
\int_{B_{R_{l}}}\left\vert f_{l}\right\vert ^{2}dx &  =\int_{B_{R_{l}}%
}\left\vert f_{l}^{\#}\right\vert ^{2}dx\\
\int_{B_{R_{l}}}\left\vert \nabla f_{l}^{\#}\right\vert ^{2}dx &  \leq
\int_{B_{R_{l}}}\left\vert \nabla f_{l}\right\vert ^{2}dx
\end{align*}
which thus
\begin{align*}
\int_{B_{R_{l}}}\left\vert \left(  -\Delta+\tau I\right)  v_{l}\right\vert
^{2}dx &  =\int_{B_{R_{l}}}\left\vert \left(  -\Delta+\tau I\right)
u_{l}\right\vert ^{2}dx\\
\int_{B_{R_{l}}}\left\vert \nabla\left(  -\Delta+\tau I\right)  v_{l}%
\right\vert ^{2}dx &  \leq\int_{B_{R_{l}}}\left\vert \left(  -\Delta+\tau
I\right)  u_{l}\right\vert ^{2}dx
\end{align*}
So, we have
\begin{align}
&  \int_{\mathbb{R}^{6}}\left\vert \nabla\left(  -\Delta+\tau I\right)
v_{l}\right\vert ^{2}+\tau\left\vert \left(  -\Delta+\tau I\right)
v_{l}\right\vert ^{2}dx\nonumber\\
\leq &  \int_{%
%TCIMACRO{\U{211d} }%
%BeginExpansion
\mathbb{R}
%EndExpansion
^{6}}\left\vert \nabla\left(  -\Delta+\tau I\right)  u_{l}\right\vert
^{2}+\tau\left\vert \left(  -\Delta+\tau I\right)  u_{l}\right\vert
^{2}dx\label{7.1}\\
\leq &  1.\nonumber
\end{align}
By the comparison argument (Corollary 3.1), we have%

\[
\int_{B_{R_{l}}}\left(  e^{\beta_{0}u_{l}^{2}}-1\right)  dx= \int_{B_{R_{l}}%
}\left(  e^{\beta_{0}u_{l}^{\#2}}-1\right)  dx\leq\int_{B_{R_{l}}}\left(
e^{\beta_{0}v_{l}^{2}}-1\right)  dx
\]
Recall
\begin{align}
&  \int_{%
%TCIMACRO{\U{211d} }%
%BeginExpansion
\mathbb{R}
%EndExpansion
^{6}}\left\vert \nabla\left(  -\Delta+\tau I\right)  v_{l}\right\vert
^{2}+\tau\left\vert \left(  -\Delta+\tau I\right)  v_{l}\right\vert
^{2}dx\nonumber\\
=  &  \left\Vert \nabla^{3}v_{l}\right\Vert _{2}^{2}+3\tau\left\Vert \Delta
v_{l}\right\Vert _{2}^{2}+3\tau^{2}\left\Vert \nabla v_{l}\right\Vert _{2}%
^{2}+\tau^{3}\left\Vert v_{l}\right\Vert _{2}^{2}. \label{7.2}%
\end{align}

From (\ref{7.1}) and (\ref{7.2}), we have%
\begin{align*}
\left\Vert v_{l}\right\Vert _{W^{1,2}}  &  =\left[  \int_{B_{R_{l}}}\left(
\left\vert v_{l}\right\vert ^{2}+\left\vert \nabla v_{l}\right\vert
^{2}\right)  \right]  ^{1/2}\\
&  \leq\sqrt{\frac{1}{\tau^{3}}+\frac{1}{3\tau^{2}}}.
\end{align*}

Now, write
\begin{align*}
\int_{B_{R_{l}}}\left(  e^{\beta_{0}v_{l}^{2}}-1\right)  dx  &  \leq
\int_{B_{R_{0}}}\left(  e^{\beta_{0}v_{l}^{2}}-1\right)  dx+\int_{B_{R_{l}%
}\smallsetminus B_{R_{0}}}\left(  e^{\beta_{0}v_{l}^{2}}-1\right)  dx\\
&  =I_{1}+I_{2}%
\end{align*}
where $R_{0}$ depends only on $\tau$ and will be chosen later. We will prove
that both $I_{1}$ and $I_{2}$ are bounded uniformly by a constant that depends
only on $\tau$.

First, we will prove that $I_{1}$ is bounded by a constant depending only on
$\tau$ using Theorem B. In order to do that, we will construct an auxiliary
radial function $w_{l}$ such that $w_{l}\in$ $W_{N}^{3,2}\left(  B_{R_{0}%
}\right)  $, $\left\Vert \nabla^{3}w_{l}\right\Vert _{2}\leq1$ and
\[
\int_{B_{R_{0}}}\left(  e^{\beta_{0}v_{l}^{2}}-1\right)  dx\leq C(R_{0}%
)\int_{B_{R_{0}}}e^{\beta_{0}w_{l}^{2}}dx.
\]

The way to construct this radial function $w_{l}$ is very similar to the case
when $m$ is even. Let
\[
z_{l}\left(  \left\vert x\right\vert \right)  =v_{l}\left(  \left\vert
x\right\vert \right)  -\frac{\Delta v_{l}\left(  R_{0}\right)  }{12}\left\vert
x\right\vert ^{2}+\frac{R_{0}^{2}\Delta v_{l}\left(  R_{0}\right)  }{12}%
-v_{l}\left(  R_{0}\right)  ,~\forall x\in B_{R_{0}}%
\]
then $z_{l}\in W_{N,rad}^{3,2}\left(  B_{R_{0}}\right)  $. Similar to that in
the proof of Lemma 4.1, and by a combination of Radial Lemmas 2.4 and 2.5, we
can prove that for $0<\left\vert x\right\vert \leq R_{0}~$($R_{0}>1$), there
exists a universal constant $c>0$ and a positive constant $d(R_{0})$ depending
only on $R_{0}$ such that
\begin{equation}
\left\vert v_{l}\left(  \left\vert x\right\vert \right)  \right\vert ^{2}%
\leq\left\vert z_{l}\left(  \left\vert x\right\vert \right)  \right\vert
^{2}\left(  1+c\frac{1}{R_{0}}\left\Vert \Delta v_{l}\right\Vert _{2}%
^{2}+\frac{c}{R_{0}}\left\Vert v_{l}\right\Vert _{W^{1,2}}^{2}\right)
^{2}+d(R_{0}). \label{7.3}%
\end{equation}
Indeed, we have
\begin{align*}
v_{l}\left(  \left\vert x\right\vert \right)   &  =z_{l}\left(  \left\vert
x\right\vert \right)  +\frac{\Delta v_{l}\left(  R_{0}\right)  }{12}\left\vert
x\right\vert ^{2}-\frac{R_{0}^{2}\Delta v_{l}\left(  R_{0}\right)  }{12}%
+v_{l}\left(  R_{0}\right) \\
&  =z_{l}\left(  \left\vert x\right\vert \right)  +g\left(  \left\vert
x\right\vert \right)  ,~\forall x\in B_{R_{0}}%
\end{align*}
where
\[
g\left(  \left\vert x\right\vert \right)  =\frac{\Delta v_{l}\left(
R_{0}\right)  }{12}\left\vert x\right\vert ^{2}-\frac{R_{0}^{2}\Delta
v_{l}\left(  R_{0}\right)  }{12}+v_{l}\left(  R_{0}\right)  .
\]
Then
\begin{align*}
\left\vert v_{l}\left(  \left\vert x\right\vert \right)  \right\vert ^{2}  &
=\left[  z_{l}\left(  \left\vert x\right\vert \right)  +g\left(  \left\vert
x\right\vert \right)  \right]  ^{2}\\
&  =\left\vert z_{l}\left(  \left\vert x\right\vert \right)  \right\vert
^{2}+2z_{l}\left(  \left\vert x\right\vert \right)  g\left(  \left\vert
x\right\vert \right)  +\left\vert g\left(  \left\vert x\right\vert \right)
\right\vert ^{2}\\
&  \leq\left\vert z_{l}\left(  \left\vert x\right\vert \right)  \right\vert
^{2}+\left\vert z_{l}\left(  \left\vert x\right\vert \right)  \right\vert
^{2}\left\vert g\left(  \left\vert x\right\vert \right)  \right\vert
^{2}+1+\left\vert g\left(  \left\vert x\right\vert \right)  \right\vert ^{2}\\
&  =\left\vert z_{l}\left(  \left\vert x\right\vert \right)  \right\vert
^{2}\left(  1+\left\vert g\left(  \left\vert x\right\vert \right)  \right\vert
^{2}\right)  +1+\left\vert g\left(  \left\vert x\right\vert \right)
\right\vert ^{2}.
\end{align*}
Note that for $0<\left\vert x\right\vert \leq R_{0}~$($R_{0}>1$), we have by
Radial lemmas 2.4 and 2.5:
\begin{align*}
\left\vert g\left(  \left\vert x\right\vert \right)  \right\vert  &
=\frac{\Delta v_{l}\left(  R_{0}\right)  }{12}\left\vert x\right\vert
^{2}-\frac{R_{0}^{2}\Delta v_{l}\left(  R_{0}\right)  }{12}+v_{l}\left(
R_{0}\right) \\
&  \leq\left\vert \frac{\Delta v_{l}\left(  R_{0}\right)  }{12}\right\vert
\left\vert x\right\vert ^{2}+\left\vert \frac{R_{0}^{2}\Delta v_{l}\left(
R_{0}\right)  }{12}\right\vert +\left\vert v_{l}\left(  R_{0}\right)
\right\vert \\
&  \leq cR_{0}^{2}\frac{1}{R_{0}^{3}}\left\Vert \Delta v_{l}\right\Vert
_{2}+c\frac{1}{R_{0}^{5/2}}\left\Vert v_{l}\right\Vert _{W^{1,2}}^{2}.
\end{align*}
Thus (\ref{7.3}) follows.

Setting
\[
w_{l}\left(  \left\vert x\right\vert \right)  :=z_{l}\left(  \left\vert
x\right\vert \right)  \left(  1+c\frac{1}{R_{0}}\left\Vert \Delta
v_{l}\right\Vert _{2}^{2}+\frac{c}{R_{0}}\left\Vert v_{l}\right\Vert
_{W^{1,2}}^{2}\right)  ,~\forall x\in B_{R_{0}}%
\]
then it is clear that $w_{l}\in W_{N,rad}^{3,2}\left(  B_{R_{0}}\right) .$
Moreover, we have the following inequalities
\begin{align*}
\left\Vert \nabla^{3}w_{l}\right\Vert _{2}  &  =\left\Vert \nabla^{3}%
z_{l}\right\Vert _{2}\left(  1+c\frac{1}{R_{0}}\left\Vert \Delta
v_{l}\right\Vert _{2}^{2}+\frac{c}{R_{0}}\left\Vert v_{l}\right\Vert
_{W^{1,2}}^{2}\right) \\
&  =\left\Vert \nabla^{3}v_{l}\right\Vert _{2}\left(  1+c\frac{1}{R_{0}%
}\left\Vert \Delta v_{l}\right\Vert _{2}^{2}+\frac{c}{R_{0}}\left\Vert
v_{l}\right\Vert _{W^{1,2}}^{2}\right) \\
&  \leq\left(  1-3\tau\left\Vert \Delta v_{l}\right\Vert _{2}^{2}-3\tau
^{2}\left\Vert \nabla v_{l}\right\Vert _{2}^{2}-\tau^{3}\left\Vert
v_{l}\right\Vert _{2}^{2}\right)  ^{1/2}\left(  1+c\frac{1}{R_{0}}\left\Vert
\Delta v_{l}\right\Vert _{2}^{2}+\frac{c}{R_{0}}\left\Vert v_{l}\right\Vert
_{W^{1,2}}^{2}\right) \\
&  \leq\left(  1-\frac{3\tau}{2}\left\Vert \Delta v_{l}\right\Vert _{2}%
^{2}-\frac{3\tau^{2}}{2}\left\Vert \nabla v_{l}\right\Vert _{2}^{2}-\frac
{\tau^{3}}{2}\left\Vert v_{l}\right\Vert _{2}^{2}\right)  \left(  1+c\frac
{1}{R_{0}}\left\Vert \Delta v_{l}\right\Vert _{2}^{2}+\frac{c}{R_{0}%
}\left\Vert v_{l}\right\Vert _{W^{1,2}}^{2}\right) \\
&  \leq1
\end{align*}
if we choose $R_{0}$ sufficiently large. Furthermore,
\[
\int_{B_{R_{0}}}\left(  e^{\beta_{0}v_{l}^{2}}-1\right)  dx\leq C(R_{0}%
)\int_{B_{R_{0}}}e^{\beta_{0}w_{l}^{2}}dx.
\]
Thus by Theorem B, we have that $I_{1}$ is bounded by a constant depending
only on $\tau.$

Now, we will estimate $I_{2}$. We choose $R_{0}\geq\left[  \frac{1}%
{3\sigma_{6}}\left(  \frac{1}{\tau^{3}}+\frac{1}{3\tau^{2}}\right)  \right]  $
$^{\frac{1}{5}}$ then from the Radial lemma 2.4, we get that $\left\vert
v_{l}(x)\right\vert \leq1$ when $\left\vert x\right\vert \geq R_{0}$. Thus we have%

\begin{align*}
I_{2}  &  =\int_{B_{R_{l}}\smallsetminus B_{R_{0}}}\left(  e^{\beta_{0}%
v_{l}^{2}}-1\right)  dx\\
&  \leq%
%TCIMACRO{\dsum \limits_{j=1}^{\infty}}%
%BeginExpansion
{\displaystyle\sum\limits_{j=1}^{\infty}}
%EndExpansion
\frac{\beta_{0}^{j}}{j!}\int_{B_{R_{l}}}v_{l}^{2}\\
&  \leq\frac{1}{\tau^{3}}%
%TCIMACRO{\dsum \limits_{j=1}^{\infty}}%
%BeginExpansion
{\displaystyle\sum\limits_{j=1}^{\infty}}
%EndExpansion
\frac{\beta_{0}^{j}}{j!}.
\end{align*}
Thus we have that $\int_{B_{R_{l}}}\left(  e^{\beta_{0}v_{l}^{2}}-1\right)
dx$ is bounded by a constant depending only on $\tau$.

Combining the above estimates and using Fatou's lemma, we can conclude that
\[
\underset{\left\Vert \nabla\left(  -\Delta+\tau I\right)  u\right\Vert
_{2}^{2}+\tau\left\Vert \left(  -\Delta+\tau I\right)  u\right\Vert _{2}%
^{2}\leq1}{\sup}\int_{%
%TCIMACRO{\U{211d} }%
%BeginExpansion
\mathbb{R}
%EndExpansion
^{6}}\left(  e^{\beta_{0}u^{2}}-1\right)  dx<\infty\text{.}%
\]
This completes the proof of Theorem 1.2.
\end{proof}

\textbf{Proofs of Theorem 1.3 and Theorem 1.4 when }$n=2m=6$\textbf{:}

To prove Theorem 1.3 when $m$ is odd, it suffices to choose $\tau>0$ as in
Lemma 2.2. Then we have
\[
\int_{%
%TCIMACRO{\U{211d} }%
%BeginExpansion
\mathbb{R}
%EndExpansion
^{6}}\left(
%TCIMACRO{\dsum \limits_{j=0}^{3}}%
%BeginExpansion
{\displaystyle\sum\limits_{j=0}^{3}}
%EndExpansion
a_{m-j}\left\vert \nabla^{j}u\right\vert ^{2}\right)  dx\geq\left\Vert
\nabla\left(  -\Delta+\tau I\right)  u\right\Vert _{2}^{2}+\tau\left\Vert
\left(  -\Delta+\tau I\right)  u\right\Vert _{2}^{2}%
\]
and we get%
\[
\underset{\int_{%
%TCIMACRO{\U{211d} }%
%BeginExpansion
\mathbb{R}
%EndExpansion
^{6}}\left(
%TCIMACRO{\dsum \limits_{j=0}^{3}}%
%BeginExpansion
{\displaystyle\sum\limits_{j=0}^{3}}
%EndExpansion
a_{m-j}\left\vert \nabla^{j}u\right\vert ^{2}\right)  dx\leq1}{\sup}\int_{%
%TCIMACRO{\U{211d} }%
%BeginExpansion
\mathbb{R}
%EndExpansion
^{6}}\left(  e^{\beta_{0}u^{2}}-1\right)  dx\leq\underset{\left\Vert
\nabla\left(  -\Delta+\tau I\right)  u\right\Vert _{2}^{2}+\tau\left\Vert
\left(  -\Delta+\tau I\right)  u\right\Vert _{2}^{2}\leq1}{\sup}\int_{%
%TCIMACRO{\U{211d} }%
%BeginExpansion
\mathbb{R}
%EndExpansion
^{6}}\left(  e^{\beta_{0}u^{2}}-1\right)  dx
\]
When $\beta>\beta_{0}$, it is showed by Kozono, Sato and Wadade \cite{KSW} and
Proposition 6.2 in \cite{RS} that the supremum in Theorem 1.3 is infinite. In
fact, the sequence of test functions which gives the sharpness of Adams'
inequality in bounded domains in \cite{A} gives also the sharpness of Adams'
inequality in unbounded domains. This completes the proof of Theorem 1.3.

Moreover, we can choose $a_{0}=$ $a_{1}=$ $a_{2}=$ $a_{3}=1$ to get Theorem 1.4.

\subsection{Proof of Theorem 1.2 when $m=2k+1,~k\in%
%TCIMACRO{\U{2115} }%
%BeginExpansion
\mathbb{N}
%EndExpansion
$}

The idea to prove the Adams type inequality in this case is a combination of
ideas in the previous subsection and ideas in Section 4.

\begin{proof}
Let $u\in W^{m,2}\left(
%TCIMACRO{\U{211d} }%
%BeginExpansion
\mathbb{R}
%EndExpansion
^{2m}\right)  $ be such that
\[
\left\Vert \nabla\left(  -\Delta+\tau I\right)  ^{k}u\right\Vert _{2}^{2}%
+\tau\left\Vert \left(  -\Delta+\tau I\right)  ^{k}u\right\Vert _{2}^{2}\leq1.
\]
By density arguments, we can find a sequence of functions $u_{l}\in
C_{0}^{\infty}\left(
%TCIMACRO{\U{211d} }%
%BeginExpansion
\mathbb{R}
%EndExpansion
^{2m}\right)  $ such that $u_{l}\rightarrow u$ in $W^{m,2}\left(
%TCIMACRO{\U{211d} }%
%BeginExpansion
\mathbb{R}
%EndExpansion
^{2m}\right)  $, $\int_{%
%TCIMACRO{\U{211d} }%
%BeginExpansion
\mathbb{R}
%EndExpansion
^{2m}}\left(  \left\vert \nabla\left(  -\Delta+\tau I\right)  ^{k}%
u_{l}\right\vert ^{2}+\tau\left\vert \left(  -\Delta+\tau I\right)  ^{k}%
u_{l}\right\vert ^{2}\right)  dx\leq1$ and supp$u_{l}\subset B_{R_{l}}$ for
any fixed $l$. Let $f_{l}:=\left(  -\Delta+\tau I\right)  ^{k}u_{l}$. Consider
the problem%
\[
\left\{
\begin{array}
[c]{c}%
\left(  -\Delta+\tau I\right)  ^{k}v_{l}=f_{l}^{\#}\\
v_{l}\in W_{N}^{m,2}\left(  B_{R_{l}}\right)
\end{array}
\right.  .
\]
Such a $v_{l}$ does exist by Section 3 and Remark 3.1. Moreover, by the
properties of rearrangement, we have
\begin{align*}
\int_{B_{R_{l}}}\left\vert \left(  -\Delta+\tau I\right)  ^{k}v_{l}\right\vert
^{2}dx &  =\int_{B_{R_{l}}}\left\vert \left(  -\Delta+\tau I\right)  ^{k}%
u_{l}\right\vert ^{2}dx\\
\int_{B_{R_{l}}}\left\vert \nabla\left(  -\Delta+\tau I\right)  ^{k}%
v_{l}\right\vert ^{2}dx &  \leq\int_{B_{R_{l}}}\left\vert \nabla\left(
-\Delta+\tau I\right)  ^{k}u_{l}\right\vert ^{2}dx
\end{align*}
which leads to
\begin{equation}
\int_{%
%TCIMACRO{\U{211d} }%
%BeginExpansion
\mathbb{R}
%EndExpansion
^{2m}}\left(  \left\vert \nabla\left(  -\Delta+\tau I\right)  ^{k}%
v_{l}\right\vert ^{2}+\tau\left\vert \left(  -\Delta+\tau I\right)  ^{k}%
v_{l}\right\vert ^{2}\right)  dx\leq1\label{8.1}%
\end{equation}
Note that from (\ref{8.1}) and the formula (\ref{2.07}), we have%
\begin{align*}
\left\Vert v_{l}\right\Vert _{W^{1,2}} &  =\left[  \int_{B_{R_{l}}}\left(
\left\vert v_{l}\right\vert ^{2}+\left\vert \nabla v_{l}\right\vert
^{2}\right)  \right]  ^{1/2}\\
&  \leq\sqrt{\frac{1}{\tau^{m}}+\frac{1}{m\tau^{m-1}}}.
\end{align*}
By Corollary 3.1, we get
\[
\int_{B_{R_{l}}}\left(  e^{\beta_{0}u_{l}^{2}}-1\right)  dx=\int_{B_{R_{l}}%
}\left(  e^{\beta_{0}u_{l}^{\#2}}-1\right)  dx\leq\int_{B_{R_{l}}}\left(
e^{\beta_{0}v_{l}^{2}}-1\right)  dx.
\]
Here, $\beta_{0}=\beta(2m,m)$.

Again, we write
\begin{align*}
\int_{B_{R_{l}}}\left(  e^{\beta_{0}v_{l}^{2}}-1\right)  dx &  \leq
\int_{B_{R_{0}}}\left(  e^{\beta_{0}v_{l}^{2}}-1\right)  dx+\int_{B_{R_{l}%
}\smallsetminus B_{R_{0}}}\left(  e^{\beta_{0}v_{l}^{2}}-1\right)  dx\\
&  =I_{1}+I_{2}%
\end{align*}
where $R_{0}$ depends only on $\tau$ and will be chosen later. We will prove
that both $I_{1}$ and $I_{2}$ are bounded uniformly by a constant that depends
only on $\tau$.

First, we will estimate $I_{2}$. We choose $R_{0}\geq\left[  \frac{1}%
{m\sigma_{n}}\left(  \frac{1}{\tau^{m}}+\frac{1}{m\tau^{m-1}}\right)  \right]
$ $^{\frac{1}{n-1}}$ then from the Radial lemma 2.4, we get that $\left\vert
v_{l}(x)\right\vert \leq1$ when $\left\vert x\right\vert \geq R_{0}$. Thus we
have%
\begin{align*}
I_{2}  &  =\int_{B_{R_{l}}\smallsetminus B_{R_{0}}}\left(  e^{\beta_{0}%
v_{l}^{2}}-1\right)  dx\\
&  \leq%
%TCIMACRO{\dsum \limits_{j=1}^{\infty}}%
%BeginExpansion
{\displaystyle\sum\limits_{j=1}^{\infty}}
%EndExpansion
\frac{\beta_{0}^{j}}{j!}\int_{B_{R_{l}}}v_{l}^{2}\\
&  \leq\frac{1}{\tau^{m}}%
%TCIMACRO{\dsum \limits_{j=1}^{\infty}}%
%BeginExpansion
{\displaystyle\sum\limits_{j=1}^{\infty}}
%EndExpansion
\frac{\beta_{0}^{j}}{j!}.
\end{align*}
Thus we have that $\int_{B_{R_{l}}}\left(  e^{\beta_{0}v_{l}^{2}}-1\right)
dx$ is bounded by a constant depending only on $\tau$.

To estimate $I_{1}$, again, we need to construct an auxiliary radial function
$w_{l}\in W_{N}^{m,2}\left(  B_{R_{0}}\right)  $ with $\left\Vert \nabla
^{m}w_{l}\right\Vert _{2}\leq1$ which increases the integral we are interested
in. We will construct such the function by the very similar way as in the case
$m$ is even \cite{RS} and the case $m=3$. For each $i\in\left\{
0,1,2,...k-1\right\}  $ we define
\[
g_{i}\left(  \left\vert x\right\vert \right)  :=\left\vert x\right\vert
^{m-1-2i},~\forall x\in B_{R_{0}}%
\]
so $g_{i}\in W_{rad}^{m,2}\left(  B_{R_{0}}\right)  $. Moreover,%
\[
\Delta^{j}g_{i}\left(  \left\vert x\right\vert \right)  =\left\{
\begin{array}
[c]{c}%
c_{i}^{j}\left\vert x\right\vert ^{m-1-2(i+j)}\text{ for }j\in\left\{
1,2,...k-i\right\} \\
0\text{ for }j\in\left\{  k-i+1,...,k\right\}
\end{array}
\right.  ~\forall x\in B_{R_{0}}%
\]
where
\[
c_{i}^{j}=%
%TCIMACRO{\dprod \limits_{h=1}^{j}}%
%BeginExpansion
{\displaystyle\prod\limits_{h=1}^{j}}
%EndExpansion
\left[  6k-2\left(  i+h-1\right)  \right]  \left[  2k-2\left(  i+h-1\right)
\right]  \text{, }\forall j\in\left\{  1,2,...k-i\right\}  .
\]
Let
\[
z_{l}\left(  \left\vert x\right\vert \right)  :=v_{l}\left(  \left\vert
x\right\vert \right)  -%
%TCIMACRO{\dsum \limits_{i=0}^{k-1}}%
%BeginExpansion
{\displaystyle\sum\limits_{i=0}^{k-1}}
%EndExpansion
a_{i}g_{i}\left(  \left\vert x\right\vert \right)  -a_{k},~\forall x\in
B_{R_{0}}%
\]
where
\begin{align*}
a_{0}  &  :=\frac{\Delta^{k}v_{l}\left(  R_{0}\right)  }{\Delta^{k}g\left(
R_{0}\right)  }\\
a_{i}  &  :=\frac{\Delta^{k-i}v_{l}\left(  R_{0}\right)  -%
%TCIMACRO{\dsum \limits_{j=0}^{i-1}}%
%BeginExpansion
{\displaystyle\sum\limits_{j=0}^{i-1}}
%EndExpansion
a_{j}\Delta^{k-i}g_{j}\left(  R_{0}\right)  }{\Delta^{k-i}g_{i}\left(
R_{0}\right)  },~\forall i\in\left\{  1,2,...k-1\right\}  ,\\
a_{k}  &  :=v_{l}\left(  R_{0}\right)  -%
%TCIMACRO{\dsum \limits_{i=0}^{k-1}}%
%BeginExpansion
{\displaystyle\sum\limits_{i=0}^{k-1}}
%EndExpansion
a_{i}g_{i}\left(  R_{0}\right)  .
\end{align*}
We can check that%
\begin{align*}
z_{l}  &  \in W_{N,rad}^{m,2}\left(  B_{R_{0}}\right)  ,\\
\nabla^{m}v_{l}  &  =\nabla^{m}z_{l}\text{ in }B_{R_{0}}\text{.}%
\end{align*}
Combining the proofs when $m$ is even in \cite{RS} and when $m=3$, the Radial
Lemma 2.4 and 2.5, we have for $R_{0}\geq1$

\begin{lemma}
For $0<\left\vert x\right\vert \leq R_{0}$, there exists some positive constant $d(m, R_0)$ depending only on $m$ and $R_0$ such that%
\begin{align*}
\left\vert v_{l}\left(  \left\vert x\right\vert \right)  \right\vert ^{2}  &
\leq\left\vert z_{l}\left(  \left\vert x\right\vert \right)  \right\vert
^{2}\left(  1+c_{m}\frac{1}{R_{0}}\left\Vert \Delta^{k}v_{l}\right\Vert
_{2}^{2}+c_{m}%
%TCIMACRO{\dsum \limits_{j=1}^{k-1}}%
%BeginExpansion
{\displaystyle\sum\limits_{j=1}^{k-1}}
%EndExpansion
\frac{1}{R_{0}}\left\Vert \Delta^{k-j}v_{l}\right\Vert _{W^{1,2}}^{2}%
+\frac{c_{m}}{R_{0}}\left\Vert v_{l}\right\Vert _{W^{1,2}}^{2}\right)  ^{2}\\
& +d(m,R_{0}).
\end{align*}

\end{lemma}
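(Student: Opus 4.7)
The plan is to follow the template of Lemma~4.1 for $m$ even (cf.\ \cite{RS}) and of the $m = 3$ case in Section~5.1. I would first decompose $v_l(|x|) = z_l(|x|) + g(|x|)$, where
\[
g(|x|) := \sum_{i=0}^{k-1} a_i\,|x|^{m-1-2i} + a_k
\]
is the polynomial correction forced upon us by the requirement that $z_l$ satisfy the homogeneous Navier conditions $\Delta^{k-i} z_l(R_0) = 0$ ($0 \leq i \leq k-1$) and $z_l(R_0) = 0$. Squaring and applying $2AB \leq A^2 B^2 + 1$ with $A = z_l(|x|)$, $B = g(|x|)$ yields the pointwise estimate
\[
|v_l(|x|)|^2 \leq |z_l(|x|)|^2 \bigl(1 + |g(|x|)|^2\bigr) + \bigl(1 + |g(|x|)|^2\bigr),
\]
so the lemma reduces to proving, for $R_0 \geq 1$ and $0 < |x| \leq R_0$, a uniform bound of the form
\[
|g(|x|)|^2 \leq c_m\Bigl(\tfrac{1}{R_0} \|\Delta^k v_l\|_2^2 + \sum_{j=1}^{k-1} \tfrac{1}{R_0} \|\Delta^{k-j} v_l\|_{W^{1,2}}^2 + \tfrac{1}{R_0} \|v_l\|_{W^{1,2}}^2\Bigr).
\]
Given such a bound, the factor announced in the lemma is produced by $1 + A \leq (1+A)^2$ for $A \geq 0$, while the residual $1 + |g|^2$ outside $|z_l|^2$ depends only on $m$, $R_0$ and $\tau$ (via the a priori bound $\|v_l\|_{W^{1,2}} \leq (1/\tau^m + 1/(m\tau^{m-1}))^{1/2}$ coming from (\ref{8.1})), so it is absorbed into $d(m,R_0)$.

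To produce the pointwise bound on $g$, I would use that for $m = 2k+1$ the explicit formulas for $\Delta^j g_i$ give $\Delta^{k-i} g_i(R_0) = c_i^{k-i}$ (a nonzero universal constant) and $\Delta^{k-i} g_j(R_0) = c_j^{k-i}\, R_0^{2(i-j)}$ for $j < i$, so that the recursion defining $a_i$ expresses each $a_i$ as a linear combination of the boundary values $\Delta^{k-j} v_l(R_0)$, $0 \leq j \leq i$, weighted by non-negative powers of $R_0$ and fixed constants. Each boundary value would then be controlled with the radial lemmas of Section~2 applied to the radial functions $v_l$ and $\Delta^j v_l$ (extended by zero across $\partial B_{R_l}$, using the Navier conditions $\Delta^j v_l = 0$ there for $j \leq k$). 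Radial Lemma~2.5 with $p = 2$, $n = 2m$ bounds $|\Delta^k v_l(R_0)|$ by $c\,R_0^{-m} \|\Delta^k v_l\|_2$, while Radial Lemma~2.4 bounds $|\Delta^{k-j} v_l(R_0)|$ for $1 \leq j \leq k-1$ and $|v_l(R_0)|$ by $c\,R_0^{-(2m-1)/2}$ times the respective $W^{1,2}$-norms. Multiplying each $|a_i|$ by $|x|^{m-1-2i} \leq R_0^{m-1-2i}$ and summing, the top contribution carries an overall $R_0^{-1}$ (from $2k - m = -1$) and the remaining ones carry powers at most $R_0^{-1/2}$ (indeed $(m-1-2i) - (2m-1)/2 = -\tfrac{1}{2} - 2i \leq -\tfrac{1}{2}$ for $i \geq 1$); squaring and applying $(\alpha + \beta)^2 \leq 2\alpha^2 + 2\beta^2$ then delivers the required bound on $|g|^2$.

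The main obstacle I anticipate is the careful book-keeping of the powers of $R_0$ in the recursion $a_i = (\Delta^{k-i} v_l(R_0) - \sum_{j<i} a_j\,\Delta^{k-i} g_j(R_0))/c_i^{k-i}$: the subtracted terms carry positive powers $R_0^{2(i-j)}$, and one must verify inductively that these never overwhelm the negative powers coming from the radial lemmas at previous stages. A secondary technicality is the applicability of Lemma~2.5 to $\Delta^k v_l$, which requires radial monotonicity; this can be justified via the iterated second-order resolvent representation in the proof of Proposition~3.3, since each $v_i$ arises from $v_{i-1}$ through a positivity- and radial-monotonicity-preserving convolution with the Bessel kernel of $-\Delta + \tau I$. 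Failing this, one can appeal only to Lemma~2.4, at the cost of replacing $\|\Delta^k v_l\|_2^2$ by $\|\Delta^k v_l\|_{W^{1,2}}^2 = \|\Delta^k v_l\|_2^2 + \|\nabla^m v_l\|_2^2$ and absorbing the bounded extra piece $\|\nabla^m v_l\|_2^2 \leq 1$ into the multiplicative constant. Once these arithmetic and regularity points are arranged, the rest of the argument is a direct adaptation of Lemma~4.1 and the $m = 3$ case.
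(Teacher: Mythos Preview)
Your proposal is correct and follows essentially the same route as the paper, which does not prove this lemma in detail but only says it follows by ``combining the proofs when $m$ is even in \cite{RS} and when $m=3$, the Radial Lemma 2.4 and 2.5''; your decomposition $v_l=z_l+g$, the inequality $2z_lg\le z_l^2g^2+1$, and the estimation of $|g|$ via the recursive formulas for the $a_i$ together with the radial lemmas reproduce exactly the argument spelled out for $m=3$ in Section~5.1. Your flagging of the radial--monotonicity hypothesis needed to invoke Lemma~2.5 for $\Delta^k v_l$, and the fallback of using Lemma~2.4 at the cost of a harmless $\|\nabla^m v_l\|_2^2\le 1$ term, is a point the paper passes over in silence.
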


Now, setting
\[
w_{l}\left(  \left\vert x\right\vert \right)  :=z_{l}\left(  \left\vert
x\right\vert \right)  \left(  1+c_{m}\frac{1}{R_{0}}\left\Vert \Delta^{k}%
v_{l}\right\Vert _{2}^{2}+c_{m}%
%TCIMACRO{\dsum \limits_{j=0}^{k-1}}%
%BeginExpansion
{\displaystyle\sum\limits_{j=0}^{k-1}}
%EndExpansion
\frac{1}{R_{0}}\left\Vert \Delta^{k-j}v_{l}\right\Vert _{W^{1,2}}^{2}%
+\frac{c_{m}}{R_{0}}\left\Vert v_{l}\right\Vert _{W^{1,2}}^{2}\right)  .
\]
Since
\begin{align*}
z_{l}  &  \in W_{N,rad}^{m,2}\left(  B_{R_{0}}\right)  ,\\
\nabla^{m}v_{l}  &  =\nabla^{m}z_{l}\text{ in }B_{R_{0}}\text{.}%
\end{align*}
we have
\[
w_{l}\in W_{N,rad}^{m,2}\left(  B_{R_{0}}\right)
\]
and%
\[
\left\Vert \nabla^{m}w_{l}\right\Vert _{2}=\left\Vert \nabla^{m}%
z_{l}\right\Vert _{2}\left(  1+c_{m}\frac{1}{R_{0}}\left\Vert \Delta^{k}%
v_{l}\right\Vert _{2}^{2}+c_{m}%
%TCIMACRO{\dsum \limits_{j=1}^{k-1}}%
%BeginExpansion
{\displaystyle\sum\limits_{j=1}^{k-1}}
%EndExpansion
\frac{1}{R_{0}}\left\Vert \Delta^{k-j}v_{l}\right\Vert _{W^{1,2}}^{2}%
+\frac{c_{m}}{R_{0}}\left\Vert v_{l}\right\Vert _{W^{1,2}}^{2}\right)  .
\]
Note that
\begin{align*}
\left\Vert \nabla^{m}z_{l}\right\Vert _{2}  &  =\left\Vert \nabla^{m}%
v_{l}\right\Vert _{2}\\
&  \leq\left(  1-\lambda\left\Vert \Delta^{k}v_{l}\right\Vert _{2}^{2}-\lambda%
%TCIMACRO{\dsum \limits_{j=1}^{k-1}}%
%BeginExpansion
{\displaystyle\sum\limits_{j=1}^{k-1}}
%EndExpansion
\left\Vert \Delta^{k-j}v_{l}\right\Vert _{W^{1,2}}^{2}-\lambda\left\Vert
v_{l}\right\Vert _{W^{1,2}}^{2}\right)  ^{1/2}\\
&  \leq1-\frac{\lambda}{2}\left\Vert \Delta^{k}v_{l}\right\Vert _{2}^{2}%
-\frac{\lambda}{2}%
%TCIMACRO{\dsum \limits_{j=1}^{k-1}}%
%BeginExpansion
{\displaystyle\sum\limits_{j=1}^{k-1}}
%EndExpansion
\left\Vert \Delta^{k-j}v_{l}\right\Vert _{W^{1,2}}^{2}-\frac{\lambda}%
{2}\left\Vert v_{l}\right\Vert _{W^{1,2}}^{2}%
\end{align*}
where
\[
\lambda=\min\left\{  \binom{m}{j}\tau^{m-j}:j=0,1,...,m.\right\}
\]
we have%
\begin{align*}
\left\Vert \nabla^{m}w_{l}\right\Vert _{2}  &  \leq\left(  1-\frac{\lambda}%
{2}\left\Vert \Delta^{k}v_{l}\right\Vert _{2}^{2}-\frac{\lambda}{2}%
%TCIMACRO{\dsum \limits_{j=1}^{k-1}}%
%BeginExpansion
{\displaystyle\sum\limits_{j=1}^{k-1}}
%EndExpansion
\left\Vert \Delta^{k-j}v_{l}\right\Vert _{W^{1,2}}^{2}-\frac{\lambda}%
{2}\left\Vert v_{l}\right\Vert _{W^{1,2}}^{2}\right)  \times\\
&  \times\left(  1+c_{m}\frac{1}{R_{0}}\left\Vert \Delta^{k}v_{l}\right\Vert
_{2}^{2}+c_{m}%
%TCIMACRO{\dsum \limits_{j=1}^{k-1}}%
%BeginExpansion
{\displaystyle\sum\limits_{j=1}^{k-1}}
%EndExpansion
\frac{1}{R_{0}}\left\Vert \Delta^{k-j}v_{l}\right\Vert _{W^{1,2}}^{2}%
+\frac{c_{m}}{R_{0}}\left\Vert v_{l}\right\Vert _{W^{1,2}}^{2}\right) \\
&  \leq1
\end{align*}
if we choose $R_{0}=R_{0}(\tau)$ sufficiently large.

Finally, note that
\[
I_{1}\leq e^{\beta_{0}d(m,R_{0})}\int_{B_{R_{0}}}e^{\beta_{0}w_{l}^{2}}dx,
\]
by using Theorem B, we can conclude that $I_{1}$ is bounded by a constant
depending only on $\tau$ since $\left\Vert \nabla^{m}w_{l}\right\Vert _{2}%
\leq1.$

Combining the above estimates and applying Fatou's lemma, we can conclude
that
\[
\underset{u\in W^{m,2}\left(
%TCIMACRO{\U{211d} }%
%BeginExpansion
\mathbb{R}
%EndExpansion
^{2m}\right)  ,\left\Vert \nabla\left(  -\Delta+\tau I\right)  ^{k}%
u\right\Vert _{2}^{2}+\tau\left\Vert \left(  -\Delta+\tau I\right)
^{k}u\right\Vert _{2}^{2}\leq1}{\sup}\int_{%
%TCIMACRO{\U{211d} }%
%BeginExpansion
\mathbb{R}
%EndExpansion
^{2m}}\left(  e^{\beta_{0}u^{2}}-1\right)  dx<\infty.
\]

\end{proof}

\textbf{Proofs of Theorem 1.3 and Theorem 1.4 when }$m$ is odd\textbf{: }From
Lemma 2.2, we have the conclusion of Theorem 1.3 when $m=2k+1,~k\in%
%TCIMACRO{\U{2115} }%
%BeginExpansion
\mathbb{N}
%EndExpansion
$.

Again, when $\beta>\beta_{0}$, we can check that the sequence of test
functions which gives the sharpness of Adams' inequality in bounded domains in
\cite{A} gives also the sharpness of Adams' inequalities in unbounded domains.
See Proposition 6.2 in \cite{RS}.

Moreover, we can choose $a_{j}=1,~j=0,...,m$ to get the Theorem 1.4.

\section{Proof of Theorem 1.1}

\begin{proof}
Let $u\in W^{m,\frac{n}{m}}\left(
%TCIMACRO{\U{211d} }%
%BeginExpansion
\mathbb{R}
%EndExpansion
^{n}\right)  $ be such that
\[
\left\Vert \nabla\left(  -\Delta+I\right)  ^{k}u\right\Vert _{\frac{n}{m}%
}^{\frac{n}{m}}+\left\Vert \left(  -\Delta+I\right)  ^{k}u\right\Vert
_{\frac{n}{m}}^{\frac{n}{m}}\leq1,
\]
by density arguments, we can find a sequence of functions $u_{l}\in
C_{0}^{\infty}\left(
%TCIMACRO{\U{211d} }%
%BeginExpansion
\mathbb{R}
%EndExpansion
^{n}\right)  $ such that $u_{l}\rightarrow u$ in $W^{m,\frac{n}{m}}\left(
%TCIMACRO{\U{211d} }%
%BeginExpansion
\mathbb{R}
%EndExpansion
^{n}\right)  $, $\int_{%
%TCIMACRO{\U{211d} }%
%BeginExpansion
\mathbb{R}
%EndExpansion
^{n}}\left(  \left\vert \nabla\left(  -\Delta+I\right)  ^{k}u_{l}\right\vert
^{\frac{n}{m}}+\left\vert \left(  -\Delta+I\right)  ^{k}u_{l}\right\vert
^{\frac{n}{m}}\right)  dx\leq1$ and supp\thinspace\ $u_{l}\subset B_{R_{l}}$
for any fixed $l$. Let $f_{l}:=\left(  -\Delta+I\right)  ^{k}u_{l}$ and
consider the problem%
\[
\left\{
\begin{array}
[c]{c}%
\left(  -\Delta+I\right)  ^{k}v_{l}=f_{l}^{\#}\\
v_{l}\in W_{N}^{m,2}\left(  B_{R_{l}}\right)
\end{array}
\right.  .
\]
By the properties of rearrangement, we have
\begin{align*}
\int_{B_{R_{l}}}\left\vert \left(  -\Delta+I\right)  ^{k}v_{l}\right\vert
^{\frac{n}{m}}dx &  =\int_{B_{R_{l}}}\left\vert \left(  -\Delta+I\right)
^{k}u_{l}\right\vert ^{\frac{n}{m}}dx\\
\int_{B_{R_{l}}}\left\vert \nabla\left(  -\Delta+I\right)  ^{k}v_{l}%
\right\vert ^{\frac{n}{m}}dx &  \leq\int_{B_{R_{l}}}\left\vert \nabla\left(
-\Delta+I\right)  ^{k}u_{l}\right\vert ^{\frac{n}{m}}dx
\end{align*}
Therefore, we have
\begin{equation}
\int_{%
%TCIMACRO{\U{211d} }%
%BeginExpansion
\mathbb{R}
%EndExpansion
^{n}}\left(  \left\vert \nabla\left(  -\Delta+I\right)  ^{k}v_{l}\right\vert
^{\frac{n}{m}}+\left\vert \left(  -\Delta+I\right)  ^{k}v_{l}\right\vert
^{\frac{n}{m}}\right)  dx\leq1\label{9.1}%
\end{equation}
By Corollary 2.1, we get
\[
\int_{B_{R_{l}}}\phi\left(  \beta_{0}\left\vert u_{l}\right\vert ^{\frac
{n}{n-m}}\right)  dx=\int_{B_{R_{l}}}\phi\left(  \beta_{0}\left\vert
u_{l}^{\#}\right\vert ^{\frac{n}{n-m}}\right)  dx\leq\int_{B_{R_{l}}}%
\phi\left(  \beta_{0}\left\vert v_{l}\right\vert ^{\frac{n}{n-m}}\right)  dx
\]
Here, $\beta_{0}=\beta(n,m)$.

Again, write
\begin{align*}
\int_{B_{R_{l}}}\phi\left(  \beta_{0}\left\vert v_{l}\right\vert ^{\frac
{n}{n-m}}\right)  dx &  \leq\int_{B_{R_{0}}}\phi\left(  \beta_{0}\left\vert
v_{l}\right\vert ^{\frac{n}{n-m}}\right)  dx+\int_{B_{R_{l}}\smallsetminus
B_{R_{0}}}\phi\left(  \beta_{0}\left\vert v_{l}\right\vert ^{\frac{n}{n-m}%
}\right)  dx\\
&  =I_{1}+I_{2}%
\end{align*}
where $R_{0}$ is a positive constant and will be chosen later. We will prove
that both $I_{1}$ and $I_{2}$ are bounded uniformly.

To do that, again, first, we need to construct an auxiliary radial function
$w_{l}\in W_{N}^{m,\frac{n}{m}}\left(  B_{R_{0}}\right)  $ with $\left\Vert
\nabla^{m}w_{l}\right\Vert _{\frac{n}{m}}\leq1$ which increases the integral
$I_{1}$. For each $i\in\left\{  0,1,2,...k-1\right\}  $ we define
\[
g_{i}\left(  \left\vert x\right\vert \right)  :=\left\vert x\right\vert
^{m-1-2i},~\forall x\in B_{R_{0}}%
\]
so $g_{i}\in W_{rad}^{m,\frac{n}{m}}\left(  B_{R_{0}}\right)  $. Moreover,%
\[
\Delta^{j}g_{i}\left(  \left\vert x\right\vert \right)  =\left\{
\begin{array}
[c]{c}%
c_{i}^{j}\left\vert x\right\vert ^{m-1-2(i+j)}\text{ for }j\in\left\{
1,2,...k-i\right\} \\
0\text{ for }j\in\left\{  k-i+1,...,k\right\}
\end{array}
\right.  ~\forall x\in B_{R_{0}}%
\]
where
\[
c_{i}^{j}=%
%TCIMACRO{\dprod \limits_{h=1}^{j}}%
%BeginExpansion
{\displaystyle\prod\limits_{h=1}^{j}}
%EndExpansion
\left[  6k-2\left(  i+h-1\right)  \right]  \left[  2k-2\left(  i+h-1\right)
\right]  \text{, }\forall j\in\left\{  1,2,...k-i\right\}  .
\]
Let
\[
z_{l}\left(  \left\vert x\right\vert \right)  :=v_{l}\left(  \left\vert
x\right\vert \right)  -%
%TCIMACRO{\dsum \limits_{i=0}^{k-1}}%
%BeginExpansion
{\displaystyle\sum\limits_{i=0}^{k-1}}
%EndExpansion
a_{i}g_{i}\left(  \left\vert x\right\vert \right)  -a_{k},~\forall x\in
B_{R_{0}}%
\]
where
\begin{align*}
a_{0}  &  :=\frac{\Delta^{k}v_{l}\left(  R_{0}\right)  }{\Delta^{k}g\left(
R_{0}\right)  }\\
a_{i}  &  :=\frac{\Delta^{k-i}v_{l}\left(  R_{0}\right)  -%
%TCIMACRO{\dsum \limits_{j=0}^{i-1}}%
%BeginExpansion
{\displaystyle\sum\limits_{j=0}^{i-1}}
%EndExpansion
a_{j}\Delta^{k-i}g_{j}\left(  R_{0}\right)  }{\Delta^{k-i}g_{i}\left(
R_{0}\right)  },~\forall i\in\left\{  1,2,...k-1\right\}  ,\\
a_{k}  &  :=v_{l}\left(  R_{0}\right)  -%
%TCIMACRO{\dsum \limits_{i=0}^{k-1}}%
%BeginExpansion
{\displaystyle\sum\limits_{i=0}^{k-1}}
%EndExpansion
a_{i}g_{i}\left(  R_{0}\right)  .
\end{align*}
We can check that%
\begin{align*}
z_{l}  &  \in W_{N,rad}^{m,\frac{n}{m}}\left(  B_{R_{0}}\right)  ,\\
\nabla^{m}v_{l}  &  =\nabla^{m}z_{l}\text{ in }B_{R_{0}}\text{.}%
\end{align*}
By a similar argument to that in \cite{RS}, and a combination of Radial Lemmas
2.4 and 2.5, we can prove that for $R_{0}\geq1$

\begin{lemma}
For $0<\left\vert x\right\vert \leq R_{0}$ we have for some constant
$d(m,n,R_{0})$ such that
\begin{align*}
&  \left\vert v_{l}\left(  \left\vert x\right\vert \right)  \right\vert
^{\frac{n}{m}}\\
\leq &  \left\vert z_{l}\left(  \left\vert x\right\vert \right)  \right\vert
^{\frac{n}{m}}\left(  1+c_{m,n}\frac{1}{R_{0}}\left\Vert \Delta^{k}%
v_{l}\right\Vert _{\frac{n}{m}}^{\frac{n}{m}}+c_{m,n}%
%TCIMACRO{\dsum \limits_{j=1}^{k-1}}%
%BeginExpansion
{\displaystyle\sum\limits_{j=1}^{k-1}}
%EndExpansion
\frac{1}{R_{0}}\left\Vert \Delta^{k-j}v_{l}\right\Vert _{W^{1,\frac{n}{m}}%
}^{\frac{n}{m}}+\frac{c_{m,n}}{R_{0}}\left\Vert v_{l}\right\Vert
_{W^{1,\frac{n}{m}}}^{\frac{n}{m}}\right)  ^{\frac{n}{m}}\\
&  +d(m,n,R_{0}).
\end{align*}

\end{lemma}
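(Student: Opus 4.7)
The plan is to generalize the argument that yielded Lemma~4.1 and the pointwise estimate~(7.3), replacing the exponent $2$ by the general exponent $p = n/m$. By the very construction of $z_l$, we have $v_l(|x|) = z_l(|x|) + g_l(|x|)$ on $B_{R_0}$, where
\[
g_l(|x|) := \sum_{i=0}^{k-1} a_i |x|^{m-1-2i} + a_k
\]
is the polynomial correction introduced above the lemma. The coefficients $a_i$ were chosen precisely so that $z_l \in W_{N,\mathrm{rad}}^{m,n/m}(B_{R_0})$ and $\nabla^m v_l = \nabla^m z_l$ on $B_{R_0}$. The task is therefore to bound $|z_l + g_l|^{n/m}$ pointwise by $|z_l|^{n/m}(1 + E)^{n/m} + d(m,n,R_0)$, with $E$ the bracketed quantity in the statement.

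Next I would derive a pointwise estimate for $|g_l(|x|)|$ on $B_{R_0}$. The coefficients $a_i$ are linear combinations (with universal constants) of the boundary values $\Delta^{k-i} v_l(R_0)$ and $v_l(R_0)$. To each of these I would apply the radial Sobolev embedding of Lemma~2.4 to the function $\Delta^{k-i} v_l \in W^{1,n/m}(\mathbb{R}^n)$, obtaining
\[
|\Delta^{k-i} v_l(R_0)| \leq \left(\tfrac{1}{m\sigma_n}\right)^{m/n} \frac{1}{R_0^{(n-1)m/n}} \|\Delta^{k-i} v_l\|_{W^{1,n/m}},
\]
and similarly for $v_l(R_0)$. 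Multiplying by the polynomial factor $|x|^{m-1-2i} \leq R_0^{m-1-2i}$, summing, and assuming $R_0 \geq 1$, I would absorb all powers of $R_0$ into a single factor $R_0^{-1}$, yielding an estimate of the form $|g_l(|x|)| \leq C_{m,n} R_0^{-1}\bigl(\|\Delta^k v_l\|_{n/m} + \sum_{j=1}^{k-1}\|\Delta^{k-j}v_l\|_{W^{1,n/m}} + \|v_l\|_{W^{1,n/m}}\bigr)$.

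I would then combine this with the elementary split valid for $a, b \geq 0$ and $p \geq 1$:
\[
(a+b)^p \leq a^p (1+b)^p + (1+b)^p \mathbf{1}_{\{a < 1\}}.
\]
When $|z_l(|x|)| \geq 1$ this gives $|v_l|^{n/m} \leq |z_l|^{n/m}(1+|g_l|)^{n/m}$ directly; when $|z_l(|x|)| < 1$, the contribution $(1+|g_l|)^{n/m}$ is bounded by a constant depending only on $m, n, R_0$, since $\|v_l\|_{W^{1,n/m}}$ (and hence $|g_l|$) is uniformly controlled through (9.1) together with Lemma~2.3. The resulting additive constant is absorbed into $d(m, n, R_0)$. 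Finally, I would convert the bound $(1+|g_l|)^{n/m}$ into the stated form $(1+E)^{n/m}$, where $E$ is a sum of norms to the $n/m$-th power divided by $R_0$, by an application of Young's inequality of the shape $R_0^{-1} N \leq \varepsilon + C_\varepsilon R_0^{-1} N^{n/m}$ (valid because $R_0 \geq 1$ and $n/m > 1$), folding all absolute constants into $c_{m,n}$.

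The main obstacle is not the pointwise comparison itself—which follows the blueprint of Lemma~4.1 verbatim—but the bookkeeping of the powers of $R_0$ and of the Sobolev norms, so that the raw bound on $|g_l|$, which is naturally of first order in the norms, is reshaped into the $(1+E)^{n/m}$ form with norms raised to the exponent $n/m$ that is needed to apply the argument of the main proof. The rest, namely the regularity claim $z_l \in W_{N,\mathrm{rad}}^{m,n/m}(B_{R_0})$, the identity $\nabla^m v_l = \nabla^m z_l$, and the explicit computation of $\Delta^j g_i$, goes through exactly as in the preceding sections.
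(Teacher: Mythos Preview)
Your outline follows the right blueprint, but there is a genuine gap at the top-order boundary value. You propose to control $|\Delta^{k} v_l(R_0)|$ through Lemma~2.4, which yields the $W^{1,n/m}$-norm $\|\Delta^{k} v_l\|_{W^{1,n/m}}$; yet in the very next line you record only $\|\Delta^{k} v_l\|_{n/m}$. This slip is not cosmetic. Since $m=2k+1$, one has $\|\Delta^{k} v_l\|_{W^{1,n/m}}^{n/m}=\|\nabla^{m-1}v_l\|_{n/m}^{n/m}+\|\nabla^{m}v_l\|_{n/m}^{n/m}$, so your bracket $E$ would contain the top-order quantity $\|\nabla^{m} v_l\|_{n/m}^{n/m}$. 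In the main argument one needs $\|\nabla^{m} w_l\|_{n/m}=\|\nabla^{m} v_l\|_{n/m}\,(1+E)\le 1$; with a $\|\nabla^{m} v_l\|$-term inside $E$ this fails: take any admissible $v_l$ with $\|\nabla^{m} v_l\|_{n/m}\to 1$ and all lower-order norms tending to $0$, and $(1+E)$ stays $\ge 1+c_{m,n}/R_0$ no matter how large $R_0$ is chosen. Note also that Lemma~2.4 delivers only the decay $R_0^{\,m-1-(n-1)m/n}=R_0^{\,m/n-1}$ for the leading contribution $a_0 g_0$, which lies strictly between $R_0^{-1}$ and $R_0^{0}$ and contradicts your claimed absorption into a single factor $R_0^{-1}$.

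The paper's remedy---and the reason it explicitly invokes Lemma~2.5 in addition to Lemma~2.4---is that $(-\Delta+I)^{k} v_l=f_l^{\#}$ is radial non-increasing by construction, so Lemma~2.5 gives $|f_l^{\#}(R_0)|\le C\,R_0^{-m}\|f_l^{\#}\|_{n/m}$. Writing $(-1)^{k}\Delta^{k} v_l = f_l^{\#}-\sum_{j=0}^{k-1}\binom{k}{j}(-\Delta)^{j}v_l$ expresses $\Delta^{k} v_l(R_0)$ as $f_l^{\#}(R_0)$ plus lower-order boundary values, each of which is legitimately handled by Lemma~2.4. Since moreover $\|f_l^{\#}\|_{n/m}\le \|\Delta^{k} v_l\|_{n/m}+C\sum_{j=0}^{k-1}\|\Delta^{j} v_l\|_{n/m}$, one obtains a bound on $|\Delta^{k} v_l(R_0)|$ involving only $\|\Delta^{k} v_l\|_{n/m}$ and lower-order $W^{1,n/m}$-norms, exactly as the lemma states, and with the correct factor $R_0^{\,m-1}\cdot R_0^{-m}=R_0^{-1}$ for the leading term. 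The rest of your sketch (the split according to $|z_l|\gtrless 1$, the Young-type conversion of first-order norm bounds into $n/m$-th powers, and the verification that $z_l\in W_{N,\mathrm{rad}}^{m,n/m}(B_{R_0})$ with $\nabla^{m}z_l=\nabla^{m}v_l$) is fine and matches the paper's intent.
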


Now, setting
\[
w_{l}\left(  \left\vert x\right\vert \right)  :=z_{l}\left(  \left\vert
x\right\vert \right)  \left(  1+c_{m,n}\frac{1}{R_{0}}\left\Vert \Delta
^{k}v_{l}\right\Vert _{\frac{n}{m}}^{\frac{n}{m}}+c_{m,n}%
%TCIMACRO{\dsum \limits_{j=1}^{k-1}}%
%BeginExpansion
{\displaystyle\sum\limits_{j=1}^{k-1}}
%EndExpansion
\frac{1}{R_{0}}\left\Vert \Delta^{k-j}v_{l}\right\Vert _{W^{1,\frac{n}{m}}%
}^{\frac{n}{m}}+\frac{c_{m,n}}{R_{0}}\left\Vert v_{l}\right\Vert
_{W^{1,\frac{n}{m}}}^{\frac{n}{m}}\right)  .
\]
Since
\begin{align*}
z_{l} &  \in W_{N,rad}^{m,\frac{n}{m}}\left(  B_{R_{0}}\right)  ,\\
\nabla^{m}v_{l} &  =\nabla^{m}z_{l}\text{ in }B_{R_{0}}\text{.}%
\end{align*}
we have
\[
w_{l}\in W_{N,rad}^{m,\frac{n}{m}}\left(  B_{R_{0}}\right)
\]
and%
\[
\left\Vert \nabla^{m}w_{l}\right\Vert _{\frac{n}{m}}=\left\Vert \nabla
^{m}z_{l}\right\Vert _{\frac{n}{m}}\left(  1+\frac{c_{m,n}}{R_{0}}\left\Vert
\Delta^{k}v_{l}\right\Vert _{\frac{n}{m}}^{\frac{n}{m}}+%
%TCIMACRO{\dsum \limits_{j=1}^{k-1}}%
%BeginExpansion
{\displaystyle\sum\limits_{j=1}^{k-1}}
%EndExpansion
\frac{c_{m,n}}{R_{0}}\left\Vert \Delta^{k-j}v_{l}\right\Vert _{W^{1,\frac
{n}{m}}}^{\frac{n}{m}}+\frac{c_{m,n}}{R_{0}}\left\Vert v_{l}\right\Vert
_{W^{1,\frac{n}{m}}}^{\frac{n}{m}}\right)  .
\]
Note that from Lemma 2.3:
\begin{align*}
\left\Vert \nabla^{m}z_{l}\right\Vert _{\frac{n}{m}} &  =\left\Vert \nabla
^{m}v_{l}\right\Vert _{\frac{n}{m}}\\
&  \leq\left(  1-\frac{1}{C}\left\Vert \Delta^{k}v_{l}\right\Vert _{\frac
{n}{m}}^{\frac{n}{m}}-\frac{1}{C}%
%TCIMACRO{\dsum \limits_{j=1}^{k-1}}%
%BeginExpansion
{\displaystyle\sum\limits_{j=1}^{k-1}}
%EndExpansion
\left\Vert \Delta^{k-j}v_{l}\right\Vert _{W^{1,\frac{n}{m}}}^{\frac{n}{m}%
}-\frac{1}{C}\left\Vert v_{l}\right\Vert _{W^{1,\frac{n}{m}}}^{\frac{n}{m}%
}\right)  ^{m/n}\\
&  \leq1-\frac{m}{nC}\left\Vert \Delta^{k}v_{l}\right\Vert _{\frac{n}{m}%
}^{\frac{n}{m}}-\frac{m}{nC}%
%TCIMACRO{\dsum \limits_{j=1}^{k-1}}%
%BeginExpansion
{\displaystyle\sum\limits_{j=1}^{k-1}}
%EndExpansion
\left\Vert \Delta^{k-j}v_{l}\right\Vert _{W^{1,\frac{n}{m}}}^{\frac{n}{m}%
}-\frac{m}{nC}\left\Vert v_{l}\right\Vert _{W^{1,\frac{n}{m}}}^{\frac{n}{m}},
\end{align*}
we have%
\begin{align*}
\left\Vert \nabla^{m}w_{l}\right\Vert _{\frac{n}{m}} &  \leq\left(  1-\frac
{m}{nC}\left\Vert \Delta^{k}v_{l}\right\Vert _{\frac{n}{m}}^{\frac{n}{m}%
}-\frac{m}{nC}%
%TCIMACRO{\dsum \limits_{j=1}^{k-1}}%
%BeginExpansion
{\displaystyle\sum\limits_{j=1}^{k-1}}
%EndExpansion
\left\Vert \Delta^{k-j}v_{l}\right\Vert _{W^{1,\frac{n}{m}}}^{\frac{n}{m}%
}-\frac{m}{nC}\left\Vert v_{l}\right\Vert _{W^{1,\frac{n}{m}}}^{\frac{n}{m}%
}\right)  \times\\
&  \times\left(  1+c_{m,n}\frac{1}{R_{0}}\left\Vert \Delta^{k}v_{l}\right\Vert
_{\frac{n}{m}}^{\frac{n}{m}}+c_{m,n}%
%TCIMACRO{\dsum \limits_{j=1}^{k-1}}%
%BeginExpansion
{\displaystyle\sum\limits_{j=1}^{k-1}}
%EndExpansion
\frac{1}{R_{0}}\left\Vert \Delta^{k-j}v_{l}\right\Vert _{W^{1,\frac{n}{m}}%
}^{\frac{n}{m}}+\frac{c_{m,n}}{R_{0}}\left\Vert v_{l}\right\Vert
_{W^{1,\frac{n}{m}}}^{\frac{n}{m}}\right)  \\
&  \leq1
\end{align*}
if we choose $R_{0}$ sufficiently large.

Finally, note that
\[
I_{1}\leq e^{\beta_{0}d(m,n,R_{0})}\int_{B_{R_{0}}}e^{\beta_{0}w_{l}^{2}}dx,
\]
by using Theorem B, we can conclude that $I_{1}$ is bounded by a constant
depending only on $n$ and $m$.

Now, by the same argument as in \cite{RS} and noting that from (\ref{9.1}) and
Lemma 2.3, we have $\left\Vert v_{l}\right\Vert _{W^{1,\frac{n}{m}}}\leq D$
for some constant $D>0,$ we can conclude that $I_{2}$ is also bounded by a
constant depending only on $n$ and $m$.

Combining the above estimates and employing Fatou's lemma, we can conclude
that
\[
\underset{u\in W^{m,\frac{n}{m}}\left(
%TCIMACRO{\U{211d} }%
%BeginExpansion
\mathbb{R}
%EndExpansion
^{n}\right)  ,\left\Vert \nabla\left(  -\Delta+I\right)  ^{k}u\right\Vert
_{\frac{n}{m}}^{\frac{n}{m}}+\left\Vert \left(  -\Delta+I\right)
^{k}u\right\Vert _{\frac{n}{m}}^{\frac{n}{m}}\leq1}{\sup}\int_{%
%TCIMACRO{\U{211d} }%
%BeginExpansion
\mathbb{R}
%EndExpansion
^{n}}\phi\left(  \beta\left(  n,m\right)  \left\vert u\right\vert ^{\frac
{n}{n-m}}\right)  dx<\infty.
\]
Again, when $\beta>\beta\left(  n,m\right)  $, it is showed by Kozono, Sato
and Wadade \cite{KSW} that the supremum is infinite. In fact, the sequence of
test functions which gives the sharpness of Adams' inequality in bounded
domains in \cite{A} gives also the sharpness of Adams' inequality in unbounded
domains (see Proposition 6.2 in \cite{RS}).
\end{proof}

\textbf{Acknowledgement:} The results of this paper were presented at the
International Conference in Geometry, Analysis and PDEs  at Jiaxing University in
China. The authors wish to thank the local organizers for invitation and hospitality.

\end{document}